\newif\ifdraft\draftfalse
\def\@begintheorem#1#2[#3]{%
    \def\naam{#1}
  \deferred@thm@head{\the\thm@headfont \thm@indent
    \@ifempty{#1}{\let\thmname\@gobble}{\let\thmname\@iden}%
    \@ifempty{#2}{\let\thmnumber\@gobble}{\let\thmnumber\@iden}%
    \@ifempty{#3}{\let\thmnote\@gobble}{\let\thmnote\@iden}%
    \thm@swap\swappedhead\thmhead{#1}{#2}{#3}%
    \the\thm@headpunct
    \thmheadnl 
    \hskip\thm@headsep
  }%
  \ignorespaces}
\newcommand{\kantlijndraft}[1]{\ifdraft\hspace{-\lastskip}%
\vadjust{\vspace{-1mm}\smash{\llap{{\tt #1}\hspace{8mm}}}\vspace{1mm}}\fi}
\def\voegToe#1#2#3{\immediate\write1{\string\newlabel{#1}{{#2}{#3}}}}
\newcommand{\thlabel}[1]{\voegToe{#1}{\naam\noexpand~\thetheorem}{\thepage}\kantlijndraft{#1}}
\renewcommand{\label}[1]{\voegToe{#1}{\@currentlabel}{\thepage}\kantlijndraft{#1}}
\newtheorem{theorem}{Theorem}[section]
\newtheorem{lemma}[theorem]{Lemma}
\newtheorem{corollary}[theorem]{Corollary}
\newtheorem{question}[theorem]{Question}
\newtheorem{proposition}[theorem]{Proposition}
\newtheorem*{mt}{Main Theorem}
\theoremstyle{definition}
\newtheorem{example}[theorem]{Example}
\newtheorem{definition}[theorem]{Definition}
\theoremstyle{remark}
\numberwithin{equation}{section}
\newtheorem{claim2}{\sc Claim}
\newcommand{\pichar}[1]{\ensuremath{\pi\chi(#1)}}
\newcommand{\sse}{\subseteq}						
\newcommand{\minus}{\backslash}						
\newcommand{\Un}{\bigcup}							
\newcommand{\un}{\cup}								
\newcommand{\Meet}{\bigcap}							
\newcommand{\meet}{\cap}							
\newcommand{\es}{\varnothing}						
\newcommand{\closure}[1]{\ensuremath{\overline{#1}}}
\newcommand{\scr}[1]{\ensuremath{\mathcal{#1}}}
\def\cprime{$'$}
\def\cont{\mathfrak{c}}
\def\sapirovskii{{\v{S}}apirovski{\u\i}}
\def\arhangelskii{Arhangel{\cprime}ski{\u\i}}
\def\juhasz{Juh{\'a}sz}
\begin{document}

\title{On the weak tightness, Hausdorff spaces, and power homogeneous compacta}

\author{Nathan Carlson}\address{Department of Mathematics, California Lutheran University, 60 W. Olsen Rd, MC 3750, Thousand Oaks, CA 91360 USA}
\email{ncarlson@callutheran.edu}

\subjclass[2010]{54A25, 54B10}

\keywords{cardinality bounds, cardinal invariants, countably tight space, homogeneous space}

\begin{abstract} 
Motivated by results of \juhasz~and van Mill in \cite{JVM2017}, we define the cardinal invariant $wt(X)$, the weak tightness of a topological space $X$, and show that $|X|\leq 2^{L(X)wt(X)\psi(X)}$ for any Hausdorff space $X$ (Theorem~\ref{mt1}). As $wt(X)\leq t(X)$ for any space $X$, this generalizes the well-known cardinal inequality $|X|\leq 2^{L(X)t(X)\psi(X)}$ for Hausdorff spaces (\arhangelskii~\cite{Arh1969},~\sapirovskii~ \cite{Sap1972}) in a new direction. Theorem~\ref{mt1} is generalized further using covers by $G_\kappa$-sets, where $\kappa$ is a cardinal, to show that if $X$ is a power homogeneous compactum with a countable cover of dense, countably tight subspaces then $|X|\leq\cont$, the cardinality of the continuum. This extends a result in \cite{JVM2017} to the power homogeneous setting.
\end{abstract}

\maketitle

\section{Introduction.}

In 2006 R. de la Vega~\cite{DeLaVega2006} answered a long-standing question of A.V.~\arhangelskii\space by showing that the cardinality of any homogeneous compactum is at most $2^{t(X)}$, where the cardinal invariant $t(X)$ is the \emph{tightness} of $X$ (Definition~\ref{tightness}).  This was a landmark result in the theory of homogeneous topological spaces and has motivated subsequent work and generalizations. Recall that a space $X$ is \emph{homogeneous} if for all $x,y\in X$ there exists a homeomorphism $h:x\to y$ such that $h(x)=y$. Roughly, a space is homogeneous if all points in the space share identical topological properties. A space is \emph{power homogeneous} if there exists a cardinal $\kappa$ such that $X^\kappa$ is homogeneous. 

Recently, \juhasz~and van Mill \cite{JVM2017} introduced fundamentally new techniques and gave the following intricate improvement of De la Vega's theorem. Their result generalizes that theorem in the countably tight case. 

\begin{theorem}[\juhasz, van Mill~\cite{JVM2017}]\label{jvm}
If a compactum $X$ is the union of countably many dense countably tight subspaces and $X^\omega$ is homogeneous, then $|X|\leq\cont$. 
\end{theorem}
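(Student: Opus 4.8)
The plan is to adapt de la Vega's elementary-submodel argument \cite{DeLaVega2006} for $|X|\le 2^{t(X)}$ on homogeneous compacta, with two substitutions: the hypothesis $t(X)\le\omega$ is replaced by the assumption that $X=\bigcup_{n<\omega}D_n$ with each $D_n$ dense and countably tight, and the homogeneity of $X$ is replaced by that of $X^\omega$. Fix a sufficiently large regular cardinal $\theta$ and an elementary submodel $M\prec H(\theta)$ having $X$, the topology of $X$, and the sequence $\langle D_n:n<\omega\rangle$ among its elements, with $\cont\subseteq M$, $|M|=\cont$, and $M$ closed under countable sequences. Since $X\in M$ and $X^\omega$ is homogeneous, $M$ satisfies ``$X^\omega$ is homogeneous''. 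It suffices to prove $X\subseteq M$, for then $|X|\le|M|=\cont$.

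The core of the argument is a reflection (closing-off) lemma: $X\cap M$ is closed in $X$. Let $p\in\overline{X\cap M}$ and choose $n$ with $p\in D_n$. Using elementarity and the countable tightness of $D_n$ one produces a countable $A\subseteq X\cap M$ with $p\in\overline A$ — this is precisely the place where the countable cover by dense, countably tight subspaces plays the role that $t(X)\le\omega$ plays in de la Vega's proof. Since $M$ is closed under countable sequences, $A\in M$ and hence $\overline A\in M$; elementarity together with $\cont\subseteq M$ then forces $\overline A\subseteq M$, using that (again under the hypothesis on $X$) the closure of a countable subset of $X$ has size at most $\cont$. Therefore $p\in M$, so $X\cap M=\overline{X\cap M}$ is a compact subspace of $X$ of cardinality at most $\cont$.

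Homogeneity of $X^\omega$ now enters exactly as homogeneity of $X$ does in \cite{DeLaVega2006}: it is used to pass from the bound on the closed subspace $Y:=X\cap M$ to a bound on $X$ itself. One argues that $Y$ (or a suitable closed subset of $Y$) has nonempty interior, so that $Y^\omega$ has nonempty interior in the homogeneous compactum $X^\omega$; then every point of $X^\omega$ lies in a homeomorphic copy of $Y^\omega$, and by compactness finitely many such copies cover $X^\omega$. Hence $|X|\le|X^\omega|\le |Y|^{\aleph_0}\le\cont$; equivalently, $X\cap M=X$.

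The step I expect to be the genuine obstacle is the reflection lemma, and within it the two assertions ``$p\in\overline A$ for some countable $A\subseteq X\cap M$'' and ``$\overline A\subseteq M$''. The hypothesis is strictly weaker than $t(X)\le\omega$: tightness is not monotone under passage to subspaces, and a countable union of countably tight subspaces need not be countably tight, so de la Vega's closing-off lemma cannot be quoted verbatim. Showing that the trace of the countable cover on $M$ still reflects enough convergence, and that closures of countable subsets of such an $X$ remain of size at most $\cont$, is where the new techniques of \juhasz~and van Mill \cite{JVM2017} are needed; that is where I would concentrate the work. (An alternative packaging would first extract from the cover that $\pi\chi(x,X)=\omega$ on a dense set, then combine this with the coincidence of character and $\pi$-character on power homogeneous compacta and with \arhangelskii's bound $|X|\le 2^{\chi(X)}$ for compacta — but the essential difficulty is the same.)
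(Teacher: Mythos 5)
Your plan leaves open precisely the steps that constitute the theorem, and as stated those steps do not go through. First, in the reflection lemma you say that from $p\in\overline{X\cap M}$, $p\in D_n$, and countable tightness of $D_n$ one extracts a countable $A\sse X\cap M$ with $p\in\closure{A}$. But $t(D_n)\leq\omega$ only applies to subsets of $D_n$, and nothing guarantees that $p\in\closure{(X\cap M)\cap D_n}$: the trace of $M$ on $X$ need not meet $D_n$ densely near $p$, even if $M$ is countably closed, since the neighborhoods of $p$ witnessing $p\in\overline{X\cap M}$ need not lie in $M$. Arranging that the approximating set meets every $D_n$ densely is exactly the point of the $\scr{C}$-saturated sets of \juhasz~and van Mill (Proposition~\ref{saturated} and Lemma~\ref{lemma2} here); without that device your appeal to ``countable tightness of $D_n$'' has nothing to act on. Second, the auxiliary claim that closures of countable subsets of such an $X$ have size at most $\cont$ is unjustified: $t(X)$ can equal $\cont$ (Example~\ref{example}), and the available bound is of the form $|\closure{D}|\leq|D|^{wt(X)\psi_c(X)}$ (Theorem~\ref{cardClosure}), where $\psi_c(X)$ is not known to be countable at this stage; in fact no bound better than $2^{\cont}$ is available a priori. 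Third, the endgame ``$Y=X\cap M$ has nonempty interior, so finitely many copies of $Y^\omega$ cover $X^\omega$'' is asserted without any argument, and there is no reason a closed set of size $\leq\cont$ obtained this way should have interior; this is not how homogeneity is exploited in de la Vega's argument, and it is not how $X^\omega$-homogeneity is exploited here.

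For comparison, the actual route (in \cite{JVM2017}, and in the power homogeneous generalization proved in this paper as Theorem~\ref{ph}) is quite different: one first shows every closed subspace of a $\sigma$-CT compactum has a point of countable $\pi$-character, whence homogeneity of $X^\omega$ (or power homogeneity plus Lemma~\ref{lemma5}) gives $\pi\chi(X)=\omega$; the dense CT cover then witnesses $wt(X)=\omega$ (Lemma~\ref{lemmaDense}); a closing-off argument with $\scr{C}$-saturated sets bounds the Lindel\"of degree of the $G^c_\delta$-modification (the Main Theorem), which combined with the existence of a subseparable $G_\delta$-set (Lemma~\ref{lemma6}) and its homogeneity-propagation (Lemma~\ref{lemma7}) yields $d(X)\leq\cont$, and finally $|X|\leq d(X)^{\pi\chi(X)}=\cont$ by Ridderbos-type bounds. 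Your parenthetical alternative gestures at part of this, but the saturation machinery and the $G_\delta$/density step are the substance of the proof, and they are missing from your proposal.
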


It was observed by \arhangelskii~in \cite{Arh2006} that De la Vega's theorem in fact follows from the following theorem of Pykeev involving the $G_\kappa$-\emph{modification} $X_\kappa$ of a space $X$, the space formed on the same underlying set as $X$ with topology generated by the $G_\kappa$-sets. (See Definition~\ref{gk}). 

\begin{theorem}[Pytkeev, \cite{Pyt1985}]\label{pytkeev}
If $X$ is compact and $\kappa$ a cardinal, then  $L(X_\kappa)\leq 2^{t(X)\cdot\kappa}$. 
\end{theorem}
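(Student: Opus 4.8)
The plan is to run an \arhangelskii-style closing-off argument, reducing the estimate to \arhangelskii's theorem that $t(Y)=F(Y)$ for every compact Hausdorff $Y$ (where $F(Y)$ is the supremum of the lengths of free sequences in $Y$); in particular, a compact Hausdorff $X$ with $t(X)\le\tau$ admits no free sequence of length $\tau^{+}$. Write $\tau=t(X)$ and $\mu=2^{\tau\cdot\kappa}$, so that $\mu^{\tau\kappa}=\mu$ and $\tau^{+}\le\mu$. Let $\mathcal{U}$ be an open cover of $X_\kappa$. Passing to a refinement of basic $G_\kappa$-sets and pulling a subcover back to $\mathcal{U}$, I may assume each $U\in\mathcal{U}$ has the form $U=\bigcap_{i<\kappa}U^{i}$ with every $U^{i}$ open in $X$; the goal is to produce $\mathcal{U}'\in[\mathcal{U}]^{\le\mu}$ covering $X$.

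The engine of the argument is the remark that closures of small sets are ``small'' for this purpose: if $A\subseteq X$ and $|A|\le\tau$, then $K=\overline{A}^{\,X}$ is compact Hausdorff of density $\le\tau$, so $|C(K)|\le\mathfrak{c}^{\tau}=2^{\tau}$, hence $w(K)\le 2^{\tau}$ (the cozero-sets form a base of $K$), hence $w(K_\kappa)\le(2^{\tau})^{\kappa}=\mu$, and therefore $L(K_\kappa)\le\mu$; thus $K$ is covered by at most $\mu$ members of $\mathcal{U}$. Using this I build increasing continuous chains $\langle D_\alpha:\alpha<\tau^{+}\rangle$ of subsets of $X$ of size $\le\mu$ and $\langle\mathcal{V}_\alpha:\alpha<\tau^{+}\rangle$ of subfamilies of $\mathcal{U}$ of size $\le\mu$, closing off so that: (i) $D_\alpha\subseteq\bigcup\mathcal{V}_{\alpha+1}$; (ii) $\overline{A}^{\,X}\subseteq\bigcup\mathcal{V}_\alpha$ whenever $A\in[D_\beta]^{\le\tau}$ for some $\beta<\alpha$ (feasible, since only $\mu$ new members of $\mathcal{U}$ are needed at each stage); and (iii) at stage $\alpha$ one selects, \emph{if possible}, a point $x_\alpha\in X\setminus\bigl(\bigcup\mathcal{V}_\alpha\cup\overline{\{x_\beta:\beta<\alpha\}}^{\,X}\bigr)$ and places it into $D_{\alpha+1}$. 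The cardinal arithmetic keeps all sets of size $\le\mu$.

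The construction terminates in one of two ways. If at some stage no point as in (iii) exists, then $X=\bigcup\mathcal{V}_\alpha\cup\overline{\{x_\beta:\beta<\alpha\}}^{\,X}$; adjoining to $\mathcal{V}_\alpha$ a $\mu$-sized subfamily of $\mathcal{U}$ covering the compact set $\overline{\{x_\beta:\beta<\alpha\}}^{\,X}$ (engine again) yields the required $\mathcal{U}'$, and we are done. Otherwise the recursion runs through all $\alpha<\tau^{+}$, producing $\langle x_\alpha:\alpha<\tau^{+}\rangle$ which, by (ii) and (iii), satisfies $\overline{\{x_\beta:\beta<\alpha\}}^{\,X}\subseteq\bigcup\mathcal{V}_{\alpha+1}$ while $x_\gamma\notin\bigcup\mathcal{V}_{\alpha+1}$ for all $\gamma\ge\alpha+1$ -- so the head-closure and the tail $\{x_\gamma:\gamma>\alpha\}$ are separated by the set $\bigcup\mathcal{V}_{\alpha+1}$, which is open in $X_\kappa$. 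One then wants to conclude that $\langle x_\alpha\rangle$ is a free sequence in $X$, contradicting $t(X)\le\tau$.

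\emph{The main obstacle} is precisely this last step. Tightness and free sequences are features of the topology of $X$, whereas $\mathcal{U}$ -- and hence the separations it provides -- lives in the strictly finer topology of $X_\kappa$, and the $X_\kappa$-closure of a set can be properly smaller than its $X$-closure. So the fact that a tail avoids the $X_\kappa$-open set $\bigcup\mathcal{V}_{\alpha+1}$ does not by itself keep the \emph{$X$}-closure of that tail away from $\overline{\{x_\beta:\beta<\alpha\}}^{\,X}$. Bridging the two topologies is where compactness (equivalently normality) of $X$ must enter: an $X$-closed -- hence compact -- set such as $\overline{\{x_\beta:\beta<\alpha\}}^{\,X}$ is automatically closed in $X_\kappa$ and can be separated \emph{inside $X$} from any point not in it, and one must carry along in the recursion, besides $D_\alpha$ and $\mathcal{V}_\alpha$, a decreasing family of compact sets trapping the tails, chosen so that at every limit stage (all of cofinality $\le\tau$ here, since $\tau^{+}$ is regular) the head-closure and the tail-closure are genuinely pulled apart by an open set of $X$. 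Making this limit-stage bookkeeping cohere -- so that either the sequence is honestly free or else the construction is forced to halt with a $\mu$-sized subcover -- is the crux; the cardinal arithmetic, the successor stages, and the appeal to $t(X)=F(X)$ are routine.
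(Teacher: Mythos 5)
Your skeleton (the closing-off recursion, the counting, and the ``engine'' $d(K)\le\tau\Rightarrow w(K)\le 2^{\tau}\Rightarrow w(K_\kappa)\le 2^{\tau\kappa}\Rightarrow L(K_\kappa)\le\mu$) is sound, but the proof is not complete, and the hole is exactly the one you flag at the end. In the second termination case you need $\langle x_\alpha:\alpha<\tau^{+}\rangle$ to be a free sequence in $X$, i.e.\ the \emph{$X$}-closures of head and tail must be disjoint for every $\alpha$. What your conditions (i)--(iii) give is only that $\overline{\{x_\beta:\beta<\alpha\}}\subseteq\bigcup\mathcal{V}_{\alpha+1}$ while the tail avoids $\bigcup\mathcal{V}_{\alpha+1}$; since $\bigcup\mathcal{V}_{\alpha+1}$ is open merely in $X_\kappa$, nothing stops the $X$-closure of the tail from re-entering it and meeting the head-closure, so no contradiction with $F(X)=t(X)\le\tau$ is obtained. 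The proposed repair (carrying a decreasing family of compacta trapping the tails and making the limit stages cohere) is precisely the missing argument, and it is not routine: as written, the proposal stops at the crux rather than crossing it. So this is a genuine gap, not a presentational one.

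It is worth seeing how the paper avoids this obstacle, since Pytkeev's theorem is there obtained as a special case of the Main Theorem (take the trivial cover $\mathcal{C}=\{X\}$, so $wt(X)\le t(X)$, note $L(X)=\omega$ for compact $X$, and $X^{c}_{\kappa}\approx X_\kappa$ by regularity): no free sequences appear at all. The bridge between the two topologies that your sketch lacks is built by representing each covering set as $G=\bigcap\mathcal{U}(G)=\bigcap_{U\in\mathcal{U}(G)}\overline{U}$ with $\mathcal{U}(G)$ a $\kappa$-sized family of \emph{$X$-open} sets (available for closed $G_\kappa$-sets in a compactum), and by formulating the closing-off condition not against the $X_\kappa$-open sets $\bigcup\mathcal{V}_\alpha$ but against all $\kappa$-sized subfamilies of the $X$-open sets $\bigcup\{\mathcal{U}(G):G\in\mathcal{G}_\alpha\}$: whenever such a subfamily fails to cover $X$, a witness point is inserted into $D_{\alpha+1}$. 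At the end, a point $x$ escaping the chosen subfamily $\mathcal{G}'$ yields for each $G\in\mathcal{G}'$ an $X$-open $U(G)\supseteq G$ with $x\notin\overline{U(G)}$; these cover the closed set $\overline{D}$, which equals $\bigcup_\alpha\overline{D_\alpha}$ by the chain lemma for $\mathcal{C}$-saturated sets (this is where tightness does its work, replacing your appeal to $F(X)=t(X)$), and hereditary Lindel\"ofness of closed sets produces a $\kappa$-sized subfamily missing $x$ that is captured at some stage $\alpha<\kappa^{+}$, contradicting the closing-off condition. If you try to patch your free-sequence route you will in effect be forced into the same move -- attaching $\kappa$-many $X$-open separators to each chosen cover member -- at which point the free sequence becomes unnecessary and you have reproduced the paper's argument; the auxiliary covering lemma there (a $\theta$-network of size $|D|^{\kappa}$ for $\overline{D}$) plays the role of your ``engine''.
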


A short, direct proof of De la Vega's theorem using Pytkeev's theorem was given in \cite{CR2012}. In addition, a generalization of Pytkeev's theorem proved in that paper has as a short corollary the well-known cardinality bound $2^{L(X)t(X)\psi(X)}$ for the cardinality of any Hausdorff space $X$ (\arhangelskii~\cite{Arh1969},~\sapirovskii~ \cite{Sap1972}). This established that the $G_\kappa$-modification of a space can be used to derive related cardinality bounds for both general spaces and homogeneous spaces simultaneously. These related bounds may be thought of as ``companion'' bounds. In the compact setting, De la Vega's theorem may be thought of as the homogeneous companion bound to the cardinality bound $2^{\chi(X)}$ for general compacta. Both follow from Pytkeev's theorem. Results in a similar vein occur in \cite{BC2017}, \cite{BS2017}, \cite{CPR2012}, \cite{SS2017}, and elsewhere.

In this study we develop the companion bound for general spaces to Theorem~\ref{jvm} above. To this end we introduce the \emph{weak tightness} $wt(X)$ of a space (Definition~\ref{weakTightness}), which satisfies $wt(X)\leq t(X)$, and prove the following theorem and central result in this paper. For a space $X$, the $G^c_\kappa$-modification $X^c_\kappa$ of $X$ is a variation of $X_\kappa$ (see Definition~\ref{gk}). Note that if $X$ is regular then $X^c_\kappa\approx X_\kappa$. 

\begin{mt}
For any space $X$ and infinite cardinal $\kappa$, $L(X^c_\kappa)\leq 2^{L(X)wt(X)\cdot\kappa}$.
\end{mt}

From the above theorem several corollaries follow. First, one corollary is that $|X|\leq 2^{L(X)wt(X)\psi(X)}$ for any Hausdorff space $X$ (Theorem~\ref{mt1}). This improves the~\arhangelskii-\sapirovskii~result. A second corollary is an extension of the \juhasz-van Mill result~\ref{jvm} into the power homogeneous setting. We show in Theorem~\ref{ph} that if $X$ is power homogeneous compactum that is the countable union of countably tight and dense in $X$ then $|X|\leq\cont$. This also generalizes the following theorem in~\cite{AVR2007}: if $X$ is a power homogeneous, countably tight compactum then $|X|\leq\cont$. 

Third, a result of Bella and Spadaro from~\cite{BS2017} follows as a corollary to the Main Theorem and can be strengthened to the power homogeneous setting. In that paper Theorem~\ref{jvm} was extended using the $G_\delta$-modification $X_\delta$ by showing that if $X$ is a homogeneous compactum which is the union of countably my dense countably tight subspaces, then $L(X_\delta)\leq\cont$. Using the Main Theorem, we show in Theorem~\ref{phbs} that this result holds even if the homogeneity property is replaced with the weaker power homogeneity property.

Finally, observe that it follows from the Main Theorem that if $X$ is compact and $\kappa$ a cardinal, then $L(X^c_\kappa)\leq 2^{wt(X)\cdot\kappa}$. This is a strengthening of Pytkeev's theorem.

This paper is organized as follows. In $\S2$ we define the cardinal invariant $wt(X)$ and give a direct proof that if $X$ is Hausdorff then $|X|\leq 2^{L(X)wt(X)\psi(X)}$ in Theorem~\ref{mt1}. This uses the fact that if $X$ is Hausdorff and $D\sse X$, then $|\closure{D}|\leq |D|^{wt(X)\psi_c(X)}$ (Theorem~\ref{cardClosure}). In $\S3$ we prove the full generalization of Theorem~\ref{mt1}, the Main Theorem mentioned above. Corollaries involving compact power homogeneous spaces follow in $\S4$.

For all undefined notions we refer the reader to~\cite{Engelking} and~\cite{Juhasz}. We make no implicit assumptions concerning separation axioms, although by \emph{compactum} we will mean a compact, Hausdorff space.

\section{An improved cardinality bound for Hausdorff spaces.}
In this section we give a direct proof of an improved cardinality bound for Hausdorff spaces. We begin with three definitions. First, we recall the definition of the tightness $t(X)$ of a space $X$. Observe any first countable space is countably tight.

\begin{definition}\label{tightness}
For $x\in X$, $t(x,X)$ is defined as the least infinite cardinal $\kappa$ such that for all $A\sse X$ if $x\in\closure{A}$ then there exists $B\sse A$ such that $|B|\leq\kappa$ and $x\in\closure{B}$. Then the \emph{tightness} $t(X)$ is defined by $t(X)=\sup\{t(x,X):x\in X\}$. A space is \emph{countably tight} (CT) if $t(X)$ is countable. A space is $\sigma$-CT if it is the countable union of CT subspaces.
\end{definition}

The following definition was defined in~\cite{CPR2012} and was used in~\cite{JVM2017}. For a subset of a set $X$ and a cardinal $\kappa$, we denote by $[A]^{\leq\kappa}$ the collection of subsets of $A$ of cardinality at most $\kappa$. Observe that $cl_\kappa(cl_\kappa A)=cl_\kappa A\sse\closure{A}$ and if $cl_\kappa A=X$ then $A$ is dense in $X$. 
\begin{definition} 
Given a cardinal $\kappa$, a space $X$, and $A\sse X$, the $\kappa$-\emph{closure} of $A$ is defined as $cl_\kappa A=\Un_{B\in[A]^{\leq\kappa}}\closure{B}$.
\end{definition}

We introduce the cardinal invariant $wt(X)$, motivated by results in~\cite{JVM2017}.
\begin{definition}\label{weakTightness}
Let $X$ be a space. The \emph{weak tightness} $wt(X)$ of $X$ is defined as the least infinite cardinal $\kappa$ for which there is a cover $\scr{C}$ of $X$ such that $|\scr{C}|\leq 2^\kappa$ and for all $C\in\scr{C}$, $t(C)\leq\kappa$ and $X=cl_{2^\kappa}C$. We say that $X$ is \emph{weakly countably tight} if $wt(X)=\omega$. 
\end{definition}
It is clear that $wt(X)\leq t(X)$. See Example~\ref{example} for an example of a space $X$ for which $wt(X)=\omega<\cont=t(X)$.

We work towards an improved bound for the cardinality of the closure of a subset of a Hausdorff space $X$. We will need the following straightforward proposition.

\begin{proposition}\label{prop}
Let $X$ be a space, $D\sse X$, and suppose there exists a cardinal $\kappa$ such that for all $x\in X$ there exists $\scr{B}_x\in\left[[D]^{\leq\kappa}\right]^{\leq\kappa}$ such that $\{x\}=\Meet_{B\in\scr{B}_x}\closure{B}$. Then $|X|\leq |D|^{\kappa}$.
\end{proposition}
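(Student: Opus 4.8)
The plan is to use the hypothesis to build an injection from $X$ into a set of size $|D|^{\kappa}$. For each $x \in X$ the hypothesis hands us a family $\scr{B}_x \in \left[[D]^{\leq\kappa}\right]^{\leq\kappa}$, i.e.\ a set of at most $\kappa$ subsets of $D$, each of size at most $\kappa$, whose closures intersect exactly in $\{x\}$. The natural map is $x \mapsto \scr{B}_x$. First I would check that this map is well-defined after fixing, once and for all, a choice of $\scr{B}_x$ for each $x$ (an application of the axiom of choice). Then I would verify it is injective: if $x \neq y$ but $\scr{B}_x = \scr{B}_y$, then $\{x\} = \Meet_{B \in \scr{B}_x}\closure{B} = \Meet_{B \in \scr{B}_y}\closure{B} = \{y\}$, a contradiction. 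So $x \mapsto \scr{B}_x$ is injective.

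It remains to bound the size of the codomain, the collection of all possible $\scr{B}_x$, which is contained in $\left[[D]^{\leq\kappa}\right]^{\leq\kappa}$. Here I would carry out the routine cardinal arithmetic: $\card{[D]^{\leq\kappa}} \leq |D|^{\kappa}$ (assuming, as is implicit, that $|D| \geq 2$; the degenerate cases $D = \es$ and $|D| = 1$ force $|X| \leq 1$ and are handled separately, or one simply notes $X = cl\,D$ is impossible to be large), and then $\card{\left[[D]^{\leq\kappa}\right]^{\leq\kappa}} \leq \left(|D|^{\kappa}\right)^{\kappa} = |D|^{\kappa\cdot\kappa} = |D|^{\kappa}$. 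Combining with the injection gives $|X| \leq |D|^{\kappa}$.

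The argument is essentially a counting argument and I do not expect any serious obstacle; the only points requiring a modicum of care are the trivial small-cardinality cases for $D$ and making sure the cardinal arithmetic identity $\card{[D]^{\leq\kappa}} \leq |D|^{\kappa}$ is invoked correctly (it uses $|D|^{\kappa} \geq \kappa$ and $2^{\kappa} \leq |D|^{\kappa}$). The separation axioms play no role here — the proposition is stated for arbitrary spaces — so the content is purely set-theoretic once the injectivity observation is made. This is why the statement is labelled "straightforward."
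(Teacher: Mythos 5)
Your proposal is correct and follows essentially the same route as the paper: fix $\scr{B}_x$ for each $x$, observe that $x\mapsto\scr{B}_x$ is injective because $\{x\}=\Meet_{B\in\scr{B}_x}\closure{B}$ determines $x$, and then bound $\left|\left[[D]^{\leq\kappa}\right]^{\leq\kappa}\right|\leq\left(|D|^{\kappa}\right)^{\kappa}=|D|^{\kappa}$. The extra remarks about degenerate small $D$ are harmless but not needed in the paper's intended applications.
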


\begin{proof}
Observe that if $\scr{B}_x=\scr{B}_y$ for $x,y\in X$, then $x=y$. This shows the function $\phi:X\to\left[[D]^{\leq\kappa}\right]^{\leq\kappa}$ defined by $\phi(x)=\scr{B}_x$ is one-to-one. Thus,  $|X|\leq\left|\left[[D]^{\leq\kappa}\right]^{\leq\kappa}\right|=\left(|D|^{\kappa}\right)^\kappa=|D|^{\kappa}$.
\end{proof}

The following theorem gives a bound for the cardinality of the closure of any subset $D$ of a Hausdorff space $X$. It represents a strengthening of the well-known cardinal inequality $|\closure{D}|\leq |D|^{t(X)\psi_c(X)}$.

\begin{theorem}\label{cardClosure}
If $X$ is Hausdorff and $D\sse X$, then $|\closure{D}|\leq |D|^{wt(X)\psi_c(X)}$.
\end{theorem}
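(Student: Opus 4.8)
The plan is to mimic the classical closing-off construction used to prove $|\closure{D}|\leq |D|^{t(X)\psi_c(X)}$, but to run it inside each of the countably-many-in-general (more precisely $2^\kappa$-many) countably-tight-in-general pieces $C\in\scr{C}$ supplied by the definition of $wt(X)$. Set $\kappa=wt(X)\psi_c(X)$ and fix a cover $\scr{C}$ witnessing $wt(X)$, so $|\scr{C}|\leq 2^{wt(X)}\leq 2^\kappa$, each $C\in\scr{C}$ has $t(C)\leq wt(X)\leq\kappa$, and $X=cl_{2^{wt(X)}}C\sse cl_{2^\kappa}C$. For each $x\in X$ fix a family of open sets $\scr{V}_x$ with $|\scr{V}_x|\leq\psi_c(X)\leq\kappa$ and $\Meet_{V\in\scr{V}_x}\closure{V}=\{x\}$ (this is where Hausdorffness, via $\psi_c$, is used). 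The goal is to produce for each $x\in\closure{D}$ a family $\scr{B}_x\in\big[[D]^{\leq\kappa}\big]^{\leq\kappa}$ with $\{x\}=\Meet_{B\in\scr{B}_x}\closure{B}$ and then invoke Proposition~\ref{prop} (applied with the set $D$ and cardinal $\kappa$, to the subspace $\closure D$) to conclude $|\closure D|\leq |D|^{\kappa}=|D|^{wt(X)\psi_c(X)}$.

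The main construction: fix $x\in\closure{D}$. First I would find a single piece $C\in\scr{C}$ that is ``useful'' for $x$. Since $X=cl_{2^\kappa}C$, there is a set $E\in[C]^{\leq 2^\kappa}$ with $x\in\closure{E}$; but I actually want to build a set of size $\leq\kappa$, so the real work is a transfinite recursion of length $\kappa^+$ that simultaneously (a) stays inside one piece $C$, (b) meets every basic neighbourhood determined by the $\scr{V}_y$'s of previously chosen points so as to force convergence to $x$, and (c) uses $t(C)\leq\kappa$ to keep the countable-tightness-style bookkeeping working. Concretely: build an increasing chain $\langle A_\alpha:\alpha<\kappa^+\rangle$ of subsets of $C\cap D$ (or of $C$, then intersect with $\closure D$), each of size $\leq\kappa$, such that $x\in\closure{A_{\alpha+1}}$ and such that for every $y$ in the closure of $A_\alpha$ that is distinct from $x$ and every $V\in\scr{V}_y$ separating $y$ from $x$, some point of $A_{\alpha+1}$ is chosen to land outside $\closure V$ (witnessing that $y$ is not a limit of the final set in the relevant direction) — the standard elementarity-free closing off. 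Let $A=\Un_{\alpha<\kappa^+}A_\alpha$; then $|A|\leq\kappa$, $x\in\closure A$, and by a cofinality-$\kappa^+ > t(C)\geq t(\closure{A})$ argument, $\closure{A}$ (closure in $X$, but the relevant limit points land in $C$'s closure) is ``closed off'': no point $y\neq x$ of a suitable small subset lies in the closure of $A$ together with $x$ in a way that both are in $\Meet\closure V$. The upshot I want is: for each $y\in\closure{A}\setminus\{x\}$ there is $V_y\in\scr{V}_x$ with $y\notin\closure{V_y}$, OR more simply that $\Meet_{V\in\scr{V}_x}(\closure{A}\cap\closure V)=\{x\}$.

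Given such an $A$, set $\scr{B}_x=\{\,A\cap\closure{V}\cap D:V\in\scr{V}_x\,\}$ (or just $\{A\}\cup\{A\cap\closure V:V\in\scr V_x\}$ with a final intersection-with-$D$ to land in $[D]^{\leq\kappa}$); this is a family of $\leq\kappa$ subsets of $D$ each of size $\leq\kappa$, so $\scr{B}_x\in\big[[D]^{\leq\kappa}\big]^{\leq\kappa}$. Then $\Meet_{B\in\scr{B}_x}\closure{B}\sse\Meet_{V\in\scr V_x}\closure V=\{x\}$ by the choice of $\scr V_x$, and $x\in\closure B$ for each $B$ by the closing-off property, so $\{x\}=\Meet_{B\in\scr B_x}\closure B$. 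Proposition~\ref{prop} then finishes it.

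The step I expect to be the main obstacle is the transfinite recursion and the verification that the resulting set $A$ of size $\leq\kappa$ genuinely ``closes off'' despite the cover $\scr{C}$ having up to $2^\kappa$ members: one must choose the right piece $C$ for $x$ early and never leave it, handle the interaction between the $\kappa^+$-length recursion and $t(C)\leq\kappa$ correctly (so that limit points of $A$ are captured by small subsets lying in the pieces already under control), and make sure the bookkeeping enumerates all the neighbourhood-separation tasks ($\leq\kappa$ of them per stage, $\leq\kappa$ stages cofinally often, giving $\leq\kappa$ total so that each task is eventually addressed). This is exactly the place where $wt$ rather than $t$ forces a genuinely new argument over the classical proof, and where the hypothesis $X=cl_{2^\kappa}C$ (not merely $X=\closure C$) is essential.
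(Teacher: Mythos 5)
There is a genuine gap, and it is at the heart of the matter. Your plan is to produce, for each $x\in\closure{D}$, a family $\scr{B}_x\in\left[[D]^{\leq\kappa}\right]^{\leq\kappa}$ with $\{x\}=\Meet_{B\in\scr{B}_x}\closure{B}$; this in particular requires some $B\sse D$ with $|B|\leq\kappa$ and $x\in\closure{B}$, i.e.\ tightness-$\kappa$ behaviour of $X$ relative to the arbitrary set $D$. That is exactly what $wt(X)\leq\kappa$ does \emph{not} give you when $wt(X)<t(X)$: the tightness hypothesis lives only on the pieces $C\in\scr{C}$, and $D$ need not be related to any single piece at all --- indeed $C\meet D$ may be empty, since the $C$'s are merely dense, so your recursion inside $C\meet D$ cannot even start, and the fallback ``work in $C$, then intersect with $D$ at the end'' destroys the property $x\in\closure{B}$ (the intersection with $D$ can be empty). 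There are also internal defects in the closing-off: a union of $\kappa^+$ sets of size $\leq\kappa$ has size $\kappa^+$, not $\leq\kappa$, so the final $A$ is too big for your purposes; and the hypothesis $X=cl_{2^\kappa}C$, which you correctly flag as essential, is never actually used after you discard the set $E\in[C]^{\leq 2^\kappa}$ because it is ``too big.''

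The paper's proof shows that no such shrinking to $\kappa$-sized subsets of $D$ is needed, and no transfinite recursion either: the $2^\kappa$-blow-up is simply absorbed by the final exponentiation. Fix $x\in\closure{D}$ and a piece $C\in\scr{C}$ with $x\in C$. Using $X=cl_{2^\kappa}C$, for each $d\in D$ choose $C_d\in[C]^{\leq 2^\kappa}$ with $d\in\closure{C_d}$ and put $A=\Un_{d\in D}C_d\sse C$; then $D\sse\closure{A}$, hence $\closure{D}\sse\closure{A}$, and $|A|\leq |D|\cdot 2^\kappa\leq |D|^\kappa$. Now take a $\psi_c$-family $\scr{U}$ at $x$ with $|\scr{U}|\leq\kappa$ and $\{x\}=\Meet_{U\in\scr{U}}\closure{U}$; for each $U\in\scr{U}$ one checks $x\in C\meet\closure{U\meet A}=cl_C(U\meet A)$, and since $U\meet A\sse C$ and $t(C)\leq\kappa$ there is $B_U\in[U\meet A]^{\leq\kappa}$ with $x\in\closure{B_U}$, whence $\{x\}=\Meet_{U\in\scr{U}}\closure{B_U}$. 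Proposition~\ref{prop} is then applied with $A$ (not $D$) in the role of the dense-like set, giving $|\closure{D}|\leq |A|^\kappa\leq\left(|D|^\kappa\right)^\kappa=|D|^\kappa$. So the missing idea in your proposal is precisely this transfer of $D$ into the piece $C$ via the $cl_{2^\kappa}$ condition, together with the observation that Proposition~\ref{prop} may be applied to the enlarged set $A$ rather than to $D$ itself.
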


\begin{proof}
Let $\kappa=wt(X)\psi_c(X)$ and let $\scr{C}$ be a cover of $X$ witnessing that $wt(X)\leq\kappa$. 

Fix $x\in\closure{D}$. As $\scr{C}$ is a cover of $X$, there exists $C\in\scr{C}$ such that $x\in X$. As $X=cl_{2^\kappa}C$, for each $d\in D$ there exists $C_d\in[C]^{\leq 2^{\kappa}}$ such that $d\in\closure{C_d}$. Set $A=\Un_{d\in D}C_d$ and observe that $A\sse C$, $D\sse\closure{A}$, and $|A|\leq |D|\cdot 2^{\kappa}\leq |D|^\kappa$. 

As $X$ is Hausdorff there exists an family $\scr{U}$ of open sets such that $|\scr{U}|\leq\psi_c(X)\leq\kappa$ and $\{x\}=\Meet\scr{U}=\Meet_{U\in\scr{U}}\closure{U}$. Let $U\in\scr{U}$ and let $V$ be an open set containing $x$. Then $x\in V\meet U$ and as $x\in\closure{D}\sse\closure{A}$ we have that $V\meet U\meet A\neq\es$. This shows $x\in\closure{U\meet A}$ and thus $x\in C\meet\closure{U\meet A}=cl_C(U\meet A)$ for all $U\in\scr{U}$. As $t(C)\leq\kappa$, for all $U$ there exists $B_U\in[U\meet A]^{\leq\kappa}$ such that $x\in cl_C(B_U)\sse\closure{B_U}$. Therefore, $x\in\Meet_{U\in\scr{U}}\closure{B_U}\sse\Meet_{U\in\scr{U}}\closure{U}=\{x\}$
and $\{x\}=\Meet_{U\in\scr{U}}\closure{B_U}$. As $\{B_U:U\in\scr{U}\}\in\left[[A]^{\leq\kappa}\right]^{\leq\kappa}$, by Proposition~\ref{prop} it follows that $|\closure{D}|\leq |A|^\kappa\leq\left(|A|^\kappa\right)^\kappa= |D|^\kappa$.
\end{proof}

In \cite{JVM2017}, \juhasz~and van Mill introduced the notion of a $\scr{C}$-saturated subset of a space $X$. 
\begin{definition}
Given a cover $\scr{C}$ of $X$, a subset $A\sse X$ is $\scr{C}$-\emph{saturated} if $A\meet C$ is dense in $A$ for every $C\in\scr{C}$. 
\end{definition}
It is clear that the union of $\scr{C}$-saturated subsets is $\scr{C}$-saturated. The following is a generalized version of Claim 1 in \cite{JVM2017}. 
\begin{lemma}\label{lemma2}
Let $X$ be a space, $\kappa$ a cardinal such that $wt(X)\leq\kappa$, and $\scr{C}$ be a cover of $X$ witnessing that $wt(X)\leq\kappa$. If $\scr{B}$ is an increasing chain of $\kappa^+$-many $\scr{C}$-saturated subsets of $X$, then 
$$\closure{\Un\scr{B}}=\Un_{B\in\scr{B}}\closure{B}.$$
\end{lemma}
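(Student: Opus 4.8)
The plan is to prove the nontrivial inclusion $\closure{\Un\scr{B}}\sse\Un_{B\in\scr{B}}\closure{B}$; the reverse inclusion is immediate since $B\sse\Un\scr{B}$ gives $\closure{B}\sse\closure{\Un\scr{B}}$ for each $B\in\scr{B}$. Write $A=\Un\scr{B}$ and fix $x\in\closure{A}$; the goal is to produce a single $B\in\scr{B}$ with $x\in\closure{B}$. First I would record that $A$ is itself $\scr{C}$-saturated, being a union of $\scr{C}$-saturated sets (as noted just before the lemma). Since $\scr{C}$ covers $X$, choose $C\in\scr{C}$ with $x\in C$. Then $A\meet C$ is dense in $A$, so $\closure{A\meet C}=\closure{A}$, and hence $x\in C\meet\closure{A\meet C}=cl_C(A\meet C)$, the closure of $A\meet C$ computed in the subspace $C$.

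Next I would invoke $t(C)\leq\kappa$: there is $S\in[A\meet C]^{\leq\kappa}$ with $x\in cl_C(S)\sse\closure{S}$. Since $S\sse A=\Un\scr{B}$, writing the chain as $\scr{B}=\{B_\alpha:\alpha<\kappa^+\}$ with $B_\alpha\sse B_\beta$ for $\alpha\leq\beta$, each $s\in S$ lies in some $B_{\alpha(s)}$. Because $|S|\leq\kappa<\kappa^+$ and $\kappa^+$ is regular, the ordinals $\{\alpha(s):s\in S\}$ are bounded below $\kappa^+$, so there is a single index $\alpha<\kappa^+$ with $S\sse B_\alpha$. Setting $B=B_\alpha$ gives $x\in\closure{S}\sse\closure{B}$, which completes the argument.

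The only step requiring genuine care is the last one: the regularity of $\kappa^+$ is precisely what allows a tightness-witness set $S$ of size $\leq\kappa$ to be absorbed into one link of a chain of length $\kappa^+$, and this is where the hypothesis ``$\kappa^+$-many'' is used. Everything else is bookkeeping — unwinding the definition of $\scr{C}$-saturation to replace $\closure{A}$ by $\closure{A\meet C}$, and the definition of tightness in $C$. One should also note explicitly that $x$ can be taken inside some member of $\scr{C}$, which legitimizes passing to $cl_C$ and holds exactly because $\scr{C}$ is a cover of $X$.
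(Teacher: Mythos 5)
Your proof is correct and follows essentially the same route as the paper's: pass to a $C\in\scr{C}$ containing $x$, use $\scr{C}$-saturation of $\Un\scr{B}$ to get $x\in cl_C(C\meet\Un\scr{B})$, apply $t(C)\leq\kappa$ to extract a $\kappa$-sized witness, and absorb it into one link of the chain since the chain has length $\kappa^+$. Your explicit mention of the regularity of $\kappa^+$ just makes precise the absorption step the paper states without comment.
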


\begin{proof}
Let $x\in\closure{\Un\scr{B}}$. There exists $C\in\scr{C}$ such that $x\in C$. As the union of $\scr{C}$-saturated subsets is $\scr{C}$-saturated we have that $\Un\scr{B}$ is $\scr{C}$-saturated. Therefore $x\in cl_C\left(C\meet\Un\scr{B}\right)$. As $t(C)\leq\kappa$ there is $A\sse C\meet\Un\scr{B}$ such that $x\in cl_CA\sse \closure{A}$ and $|A|\leq\kappa$. As $|\scr{B}|=\kappa^+$, $A\sse\Un\scr{B}$, and $\scr{B}$ is an increasing chain, there exists $B\in\scr{B}$ such that $A\sse B$. Thus $x\in\closure{B}$ which completes the proof.
\end{proof}

To prove the primary result in this section, Theorem~\ref{mt1} below, we proceed with a standard ``closing-off'' argument, also known as the ``Pol-\sapirovskii'' technique. These techniques are commonly used to establish bounds on the cardinality of a topological space.

\begin{theorem}\label{mt1}
If $X$ is Hausdorff then $|X|\leq 2^{L(X)wt(X)\psi(X)}$.
\end{theorem}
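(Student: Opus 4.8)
The plan is to run a Pol--\sapirovskii\ closing-off argument. Set $\kappa=L(X)wt(X)\psi(X)$; the goal is $|X|\le 2^\kappa$. First I would fix a cover $\scr{C}$ witnessing $wt(X)\le\kappa$ and, for each $x\in X$, a family $\scr{V}(x)$ of open neighbourhoods of $x$ with $|\scr{V}(x)|\le\kappa$ and $\Meet\scr{V}(x)=\{x\}$, which is available because $\psi(X)\le\kappa$. I would also invoke the standard fact that $\psi_c(X)\le L(X)\psi(X)\le\kappa$ for a Hausdorff space $X$ — proved by thinning, for each point, a pseudobase of size $\le\psi(X)$: the complement of each member is closed, hence of Lindel\"of degree $\le L(X)$, so it is covered by $\le L(X)$ many open sets each disjoint from a fixed open neighbourhood of the point, and throwing in those neighbourhoods produces a family of $\le L(X)\psi(X)$ open sets whose closures meet exactly in the point. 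Consequently $wt(X)\psi_c(X)\le\kappa$, so Theorem~\ref{cardClosure} gives $|\closure D|\le|D|^\kappa$ for every $D\sse X$; in particular $|\closure D|\le 2^\kappa$ whenever $|D|\le 2^\kappa$.

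Next I would record two preparatory facts that keep the construction within cardinality $2^\kappa$ while respecting $\scr{C}$-saturation. (i) Every $D\sse X$ with $|D|\le 2^\kappa$ sits inside a $\scr{C}$-saturated set $D^{*}$ with $|D^{*}|\le 2^\kappa$: since $|\scr C|\le 2^\kappa$ and $X=cl_{2^\kappa}C$ for each $C\in\scr C$, one adjoins to $D$, for every $d\in D$ and every $C\in\scr{C}$, a set $B\in[C]^{\le 2^\kappa}$ with $d\in\closure B$, and iterates $\omega$ times. (ii) The closure of a $\scr{C}$-saturated set is again $\scr{C}$-saturated, because if $A\cap C$ is dense in $A$ then $\closure{A\cap C}\supseteq\closure A$ while $A\cap C\sse\closure A\cap C$. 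Combining (i), (ii) and the closure-cardinality bound, from any set of size $\le 2^\kappa$ one can pass to a closed, $\scr{C}$-saturated superset of size $\le 2^\kappa$.

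I would then build an increasing chain $\{F_\alpha:\alpha<\kappa^+\}$ of closed, $\scr{C}$-saturated subsets of $X$, each of size $\le 2^\kappa$, arranged so that $F_{\alpha+1}$ meets $X\setminus\Un\scr{W}$ for every $\scr{W}\in\bigl[\Un\{\scr{V}(y):y\in F_\alpha\}\bigr]^{\le\kappa}$ with $\Un\scr{W}\ne X$. This is possible since $|\Un\{\scr V(y):y\in F_\alpha\}|\le|F_\alpha|\cdot\kappa\le 2^\kappa$, so there are at most $2^\kappa$ relevant $\scr W$; at a successor step one adjoins a single witness point for each and then passes to a closed $\scr{C}$-saturated superset as above, and at a limit step one takes $F_\alpha=\closure{\Un_{\beta<\alpha}F_\beta}$, which is $\scr{C}$-saturated (a union of $\scr C$-saturated sets is $\scr C$-saturated, and closure preserves this) and has size $\le 2^\kappa$ since $|\alpha|\le\kappa$. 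Put $H=\Un_{\alpha<\kappa^+}F_\alpha$, so $|H|\le\kappa^+\cdot 2^\kappa=2^\kappa$. Since $\{F_\alpha:\alpha<\kappa^+\}$ is an increasing chain of $\kappa^+$-many $\scr{C}$-saturated sets, Lemma~\ref{lemma2} gives $\closure H=\Un_\alpha\closure{F_\alpha}=\Un_\alpha F_\alpha=H$, so $H$ is closed.

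Finally I would show $H=X$, which completes the proof since then $|X|=|H|\le 2^\kappa$. Assuming $x\in X\setminus H$, for each $h\in H$ one has $h\ne x$, so $\Meet\scr V(h)=\{h\}$ supplies $W_h\in\scr V(h)$ with $x\notin W_h$; then $\{W_h:h\in H\}$ is an open cover of the closed set $H$, and $L(H)\le L(X)\le\kappa$, so there is $S\in[H]^{\le\kappa}$ with $H\sse\Un_{h\in S}W_h$ and $x\notin\Un_{h\in S}W_h$. By regularity of $\kappa^+$ and monotonicity of the chain, $S\sse F_\alpha$ for some $\alpha<\kappa^+$, whence $\scr W:=\{W_h:h\in S\}\in\bigl[\Un\{\scr V(y):y\in F_\alpha\}\bigr]^{\le\kappa}$ with $\Un\scr W\ne X$, so $F_{\alpha+1}$ meets $X\setminus\Un\scr W$ by construction — contradicting $F_{\alpha+1}\sse H\sse\Un\scr W$. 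I expect the main obstacle to be organizing the recursion so that every $F_\alpha$ is simultaneously closed, $\scr{C}$-saturated, and of size $\le 2^\kappa$: this is exactly what allows Lemma~\ref{lemma2} to replace the classical appeal to $t(X)\le\kappa$ when verifying that the length-$\kappa^+$ union $H$ is closed, and it leans on facts (i)--(ii) together with the closure-cardinality bound of Theorem~\ref{cardClosure} (which in turn requires $\psi_c(X)\le L(X)\psi(X)$).
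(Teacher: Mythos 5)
Your proof is correct and takes essentially the same route as the paper: a Pol--\sapirovskii\ closing-off argument over a $\kappa^+$-chain of $\scr{C}$-saturated sets of size at most $2^\kappa$, using Theorem~\ref{cardClosure} (via $\psi_c(X)\leq L(X)\psi(X)$) to bound closures and Lemma~\ref{lemma2} to see the final union is closed. The only organizational differences are that you keep every stage closed (using the correct observation that the closure of a $\scr{C}$-saturated set is $\scr{C}$-saturated), whereas the paper keeps non-closed stages $A_\alpha$ and indexes its closing-off condition by $\scr{U}(\closure{A_\alpha})$, and your explicit construction of saturated hulls of size $\leq 2^\kappa$ is exactly the paper's Proposition~\ref{saturated}.
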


\begin{proof}
Let $\kappa=L(X)wt(X)\psi(X)$ and note that $\psi_c(X)\leq L(X)\psi(X)\leq\kappa$. For all $x\in X$ let $\scr{U}_x$ be a family of open sets such that $\{x\}=\Meet\scr{U}_x=\Meet_{U\in\scr{U}_x}\closure{U}$. If $B\sse X$, let $\scr{U}(B)=\Un_{x\in B}\scr{U}_x$. Let $\scr{C}$ be a cover of $X$ witnessing that $wt(X)\leq\kappa$. For every $x\in X$ we can fix a $\scr{C}$-saturated set $S(x)\in[X]^{\leq 2^\kappa}$ with $x\in S(x)$.

We build an increasing sequence $\{A_\alpha:\alpha<\kappa^+\}$ of subsets of $X$ such that 
\begin{enumerate}
\item $|A_\alpha|\leq 2^\kappa$.
\item $A_\alpha$ is $\scr{C}$-saturated for all $\alpha<\kappa^+$.
\item if $\scr{V}\in[\scr{U}(\closure{A_\alpha})]^{\leq\kappa}$ is such that $X\minus\Un\scr{V}\neq\es$, then $A_{\alpha+1}\minus\Un\scr{V}\neq\es$.
\end{enumerate}
Let $A_0$ be an any $\scr{C}$-saturated set such that $|A_0|\leq 2^\kappa$. For limit ordinals $\beta<\kappa^+$, let $A_\beta=\Un_{\alpha<\beta}A_\alpha$. Then $|A_\beta|\leq 2^\kappa$. As the union of $\scr{C}$-saturated sets is $\scr{C}$-saturated, as noted in \cite{JVM2017}, it follows that $A_\beta$ is $\scr{C}$-saturated. For successor ordinals $\beta+1$, for all $\scr{V}\in[\scr{U}(\closure{A_\beta})]^{\leq\kappa}$ with $X\minus\Un\scr{V}\neq\es$, let $x_\scr{V}\in X\minus\Un\scr{V}$. Define
$$A_{\beta+1}=A_\beta\un\Un\{S(x_\scr{V}):\scr{V}\in[\scr{U}(\closure{A_\beta})]^{\leq\kappa}\textup{ and }X\minus\Un\scr{V}\neq\es\}.$$
Note that $|\closure{A_\beta}|\leq |A_\beta|^{\leq\kappa}\leq 2^\kappa$ by Theorem~\ref{cardClosure} and thus $\left|[\scr{U}(\closure{A_\beta})]^{\leq\kappa}\right|\leq 2^\kappa$. As $|S(x_\scr{V})|\leq 2^\kappa$ we have $|A_{\beta+1}|\leq 2^\kappa$. As each $S(x_\scr{V})$ is $\scr{C}$-saturated and $A_\beta$ is $\scr{C}$-saturated we have that $A_{\beta+1}$ is $\scr{C}$-saturated. 

Let $A=\Un\{\closure{A_\alpha}:\alpha<\kappa^+\}$. By Lemma~\ref{lemma2} it follows that $A=\closure{\Un\{A_\alpha:\alpha<\kappa^+\}}$ and $A$ is closed. By Theorem~\ref{cardClosure}, $|A|\leq\left|\Un\{A_\alpha:\alpha<\kappa^+\}\right|^\kappa\leq\left(\kappa^+\cdot 2^\kappa\right)^\kappa=2^\kappa$. We show now that $X=A$. Suppose there exists $x\in X\minus A$. Then for all $a\in A$ there exists $U_a\in\scr{U}_a$ such that $x\notin\closure{U_a}$. Now $\scr{U}=\{U_a:a\in A\}$ is an open cover of  $A$. As $A$ is closed and $L(X)$ is hereditary on closed subsets, there exists $\scr{V}\in[\scr{U}]^{\leq\kappa}$ such that $A\sse\Un\scr{V}$. Then $\scr{V}\in[\scr{U}(\closure{A_\alpha})]^{\leq\kappa}$ for some $\alpha<\kappa^+$. Since $x\in X\minus\Un\scr{V}$, by (3) above $A_{\alpha+1}\minus\Un\scr{V}\neq\es$. Then
$$\es\neq A_{\alpha+1}\minus\Un\scr{V}\sse A\sse \Un\scr{V},$$
a contradition. Therefore $X=A$ and $|X|=|A|\leq 2^\kappa$.
\end{proof}

\begin{corollary}
If $X$ is a weakly countably tight, Lindel\"of Hausdorff space of countable pseudocharacter then $|X|\leq\cont$.
\end{corollary}

If there is a cover $\scr{C}$ of a space $X$ and a cardinal $\kappa$ for which $|\scr{C}|\leq 2^\kappa$ and for all $C\in\scr{C}$, $t(C)\leq\kappa$, and $X=cl_{2^\kappa}C$ then, by definition, $wt(X)\leq\kappa$. As Lemma~\ref{lemmaDense} shows, if there are additional cardinal restrictions on the $\pi$-character $\pi_\chi(X)$ or tightness $t(X)$ of a space, then the condition ``$X=cl_{2^\kappa}C$ for all $C\in\scr{C}$'' can be relaxed to ``$C$ is dense in $X$ for all $C\in\scr{C}$'' to still guarantee that $wt(X)\leq\kappa$.

\begin{lemma}\label{lemmaDense}
Let $X$ be a space, $\kappa$ a cardinal, and $\scr{C}$ a cover of $X$ such that $|\scr{C}|\leq 2^\kappa$, and for all $C\in\scr{C}$, $t(C)\leq\kappa$ and $C$ is dense in $X$. If $t(X)\leq 2^\kappa$ or $\pi_\chi(X)\leq 2^\kappa$ then $wt(X)\leq\kappa$.
\end{lemma}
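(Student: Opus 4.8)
The goal is to show that under the hypotheses on $\scr{C}$ together with either $t(X)\leq 2^\kappa$ or $\pi_\chi(X)\leq 2^\kappa$, each $C\in\scr{C}$ actually satisfies $X=cl_{2^\kappa}C$, which by definition gives $wt(X)\leq\kappa$. So fix $C\in\scr{C}$; since $C$ is dense in $X$, we must show that every $x\in X$ lies in $\closure{B}$ for some $B\in[C]^{\leq 2^\kappa}$.

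\emph{Case 1: $t(X)\leq 2^\kappa$.} Since $C$ is dense, $x\in\closure{C}$; as $t(X)\leq 2^\kappa$, there is $B\sse C$ with $|B|\leq 2^\kappa$ and $x\in\closure{B}$. Then $B\in[C]^{\leq 2^\kappa}$ and $x\in\closure{B}\sse cl_{2^\kappa}C$, so $cl_{2^\kappa}C=X$. This case is immediate.

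\emph{Case 2: $\pi_\chi(X)\leq 2^\kappa$.} Fix $x\in X$ and let $\scr{V}$ be a local $\pi$-base at $x$ with $|\scr{V}|\leq 2^\kappa$. For each $V\in\scr{V}$, density of $C$ gives a point $c_V\in C\meet V$. Put $B=\{c_V:V\in\scr{V}\}\in[C]^{\leq 2^\kappa}$. I claim $x\in\closure{B}$: given any open $W\ni x$, by definition of $\pi$-base there is $V\in\scr{V}$ with $V\sse W$, hence $c_V\in V\sse W$ meets $B$, so $W\meet B\neq\es$. Thus $x\in\closure{B}\sse cl_{2^\kappa}C$, and again $cl_{2^\kappa}C=X$.

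In both cases $\scr{C}$ witnesses $wt(X)\leq\kappa$ by Definition~\ref{weakTightness}, since $|\scr{C}|\leq 2^\kappa$ and for each $C\in\scr{C}$ we have $t(C)\leq\kappa$ and $X=cl_{2^\kappa}C$. There is no real obstacle here — the argument is a routine unwinding of the definitions of $\kappa$-closure, tightness, and $\pi$-character; the only point requiring a moment's care is checking in Case 2 that the chosen selector set $B$ has $x$ in its closure, which follows directly from the defining property of a local $\pi$-base.
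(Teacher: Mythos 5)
Your proposal is correct and follows essentially the same argument as the paper: in both cases one shows $X=cl_{2^\kappa}C$ for each $C\in\scr{C}$, using tightness directly in case (a) and selecting a point of $C$ from each member of a small local $\pi$-base in case (b). The extra verification that $x$ lies in the closure of the selector set is exactly the (implicit) step in the paper's proof, so there is nothing to add.
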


\begin{proof}
We show that if either (a) $t(X)\leq 2^\kappa$ or (b) $\pi_\chi(X)\leq 2^\kappa$ then the cover $\scr{C}$ is a cover witnessing that $wt(X)\leq\kappa$. It only needs to be shown that $X=cl_{2^\kappa}(C)$ for all $C\in\scr{C}$. Let $C\in\scr{C}$ and $x\in X$. For (a), as $C$ is dense in $X$ and $t(X)\leq 2^\kappa$, then there exists $A\in[C]^{\leq 2^\kappa}$ such that $x\in\closure{A}$. Thus, $x\in cl_{2^\kappa}(C)$. For (b), let $\scr{B}$ be a local $\pi$-base at $x$ such that $|\scr{B}|\leq 2^\kappa$. As $C$ is dense in $X$, for all $B\in\scr{B}$ there exists $x_B\in B\meet C$. Then $x\in\closure{\{x_B:B\in\scr{B}\}}$ and $x\in cl_{2^\kappa}(C)$. In either case $X=cl_{2^\kappa}(C)$ as desired.
\end{proof}

Using Lemma~\ref{lemmaDense} and Theorem~\ref{mt1}, we have the following.
\begin{corollary}
Let $X$ be a Hausdorff space, $\kappa$ a cardinal, and $\scr{C}$ a cover of $X$ such that $|\scr{C}|\leq 2^\kappa$, and for all $C\in\scr{C}$, $t(C)\leq\kappa$ and $C$ is dense in $X$. If $t(X)\leq 2^\kappa$ or $\pi_\chi(X)\leq 2^\kappa$ then $|X|\leq 2^{L(X)\psi(X)\cdot\kappa}$.
\end{corollary}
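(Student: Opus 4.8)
The plan is to deduce this corollary directly from the two results it cites, with no new machinery. Given a Hausdorff space $X$, a cardinal $\kappa$, and a cover $\scr{C}$ with $|\scr{C}|\leq 2^\kappa$ and each $C\in\scr{C}$ satisfying $t(C)\leq\kappa$ and $C$ dense in $X$, together with the hypothesis that $t(X)\leq 2^\kappa$ or $\pi_\chi(X)\leq 2^\kappa$, I would first invoke Lemma~\ref{lemmaDense} to conclude that $wt(X)\leq\kappa$. That is the entire content of the ``weak tightness'' step, and it is immediate from the statement of the lemma.

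Next I would feed this into Theorem~\ref{mt1}, which gives $|X|\leq 2^{L(X)wt(X)\psi(X)}$ for any Hausdorff space. Substituting $wt(X)\leq\kappa$ yields $|X|\leq 2^{L(X)\psi(X)\cdot\kappa}$, since $L(X)wt(X)\psi(X)\leq L(X)\psi(X)\cdot\kappa$ and the map $\lambda\mapsto 2^\lambda$ is monotone. This finishes the argument.

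Honestly there is no real obstacle here: the corollary is a straightforward composition of Lemma~\ref{lemmaDense} (which repackages the hypotheses into the bound $wt(X)\leq\kappa$) and Theorem~\ref{mt1} (which is the cardinality bound in terms of $wt$). The only point requiring the tiniest care is that $\kappa$ is assumed infinite implicitly — the cardinal arithmetic $L(X)wt(X)\psi(X)\le L(X)\psi(X)\cdot\kappa$ uses $wt(X)\le\kappa$, and one should note in passing that $wt(X)$ is by definition an infinite cardinal so the product on the right is genuinely $\geq\omega$ — but this is routine. I would write the proof as a two-line deduction.

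\begin{proof}
By Lemma~\ref{lemmaDense}, the hypotheses imply $wt(X)\leq\kappa$. Hence $L(X)wt(X)\psi(X)\leq L(X)\psi(X)\cdot\kappa$, and since $X$ is Hausdorff, Theorem~\ref{mt1} gives
$$|X|\leq 2^{L(X)wt(X)\psi(X)}\leq 2^{L(X)\psi(X)\cdot\kappa}.$$
\end{proof}
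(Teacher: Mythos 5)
Your proof is correct and matches the paper's intended argument exactly: the paper derives this corollary precisely by combining Lemma~\ref{lemmaDense} (to get $wt(X)\leq\kappa$) with Theorem~\ref{mt1}. Nothing further is needed.
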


We end this section with an example of a weakly countably tight space that is not countably tight. The example is a particular case of one given in~\cite{JVM2017}; our work here is to verify it is weakly countably tight. We do this using Lemma~\ref{lemmaDense}.

\begin{example}[$wt(X)=\omega<\cont=t(X)$]\label{example}
Let $Y$ be the Cantor cube $2^\cont$ with its usual topology. As in~\cite{JVM2017}, let $X=C_1\un C_2$ as a subspace of $Y$ where 
$$C_i=\{x\in Y:|\{\alpha<\cont:x(\alpha)|<\omega\}$$
for $i\in\{0,1\}$. Let $\scr{C}=\{C_1,C_2\}$ and note $\scr{C}$ is a cover of $X$ consisting of countably tight subspaces. It is easily seen that each $C_i$ is dense in $X$. Also, as mentioned in~\cite{JVM2017}, $t(X)=\cont=2^\omega$. Therefore, by Lemma~\ref{lemmaDense}, $wt(X)=\omega$.

Note that $X$ is in fact a $\sigma$-compact group, and thus is an example of a weakly countably tight homogeneous space that is not countably tight.
\end{example}

\section{A generalization using $G^c_\kappa$-sets.}

\begin{definition}\label{gk}
Let $X$ be a space and $\kappa$ be a cardinal. A $G_\kappa$-\emph{set} in $X$ is a $\kappa$-intersection of open sets of $X$. (``$G_\delta$-set'' is the common name for a $G_\omega$-set). A $G_\kappa^c$-\emph{set} is set $G\sse X$ such that there exists a $\kappa$-family of open sets $\scr{U}$ such that $G=\Meet\scr{U}=\Meet_{U\in\scr{U}}\closure{U}$. The $G_\kappa$-\emph{modification} $X_\kappa$ of $X$ is the space formed on the set $X$ with topology generated by the collection of $G_\kappa$-sets. The $G^c_\kappa$-\emph{modification} $X^c_\kappa$ of $X$ is the space formed on the set $X$ with topology generated by the collection of $G^c_\kappa$-sets.
\end{definition}

We work towards a generalization of Theorem~\ref{mt1} involving covers by $G^c_\kappa$ sets. We again develop a closing-off argument that, by use of $G^c_\kappa$ sets, adds additional complexity to the argument used in the proof of Theorem~\ref{mt1}. We begin with a series of propositions and lemmas. 

In the proof of Lemma 3.2 in \cite{JVM2017} the following proposition is used in the case $\kappa=\omega$. It is mentioned as having an $\omega$-length proof by induction. We give this proof below.

\begin{proposition}\label{saturated}
Let $X$ be a space, $wt(X)=\kappa$, and let $\scr{C}$ be a cover witnessing that $wt(X)=\kappa$. Then for all $x\in X$ there exists $S(x)\in[X]^{\leq 2^\kappa}$ such that $x\in S(x)$ and $S(x)$ is $\scr{C}$-saturated.
\end{proposition}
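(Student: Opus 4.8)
The plan is to construct $S(x)$ as the union of an increasing $\omega$-chain of sets $D_0 \subseteq D_1 \subseteq D_2 \subseteq \cdots$, each of size at most $2^\kappa$, where at each stage we add, for every $C \in \scr{C}$, enough points to make $D_{n+1} \cap C$ dense in $D_n$. First I would set $D_0 = \{x\}$. Given $D_n$ with $|D_n| \leq 2^\kappa$, for each $C \in \scr{C}$ and each point $d \in D_n$ that lies in $\closure{D_n \cap C}$... no, more carefully: for each $C \in \scr{C}$ we want $D_{n+1} \cap C$ to be dense in $D_n$, so for each point $d$ in the closure (within $D_n$, equivalently within $X$) of the set $\{y \in D_n : \text{every neighborhood of } y \text{ meets } C\}$ — actually the cleanest formulation: since $X = cl_{2^\kappa} C$, for every $y \in X$ (in particular every $y \in D_n$) there is a set $C_y \in [C]^{\leq 2^\kappa}$ with $y \in \closure{C_y}$. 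Put $D_{n+1} = D_n \cup \bigcup_{C \in \scr{C}} \bigcup_{y \in D_n} C_y$. Then $|D_{n+1}| \leq |D_n| \cdot |\scr{C}| \cdot 2^\kappa \leq 2^\kappa$, using $|\scr{C}| \leq 2^\kappa$ and $|D_n| \leq 2^\kappa$.

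Next I would set $S(x) = \bigcup_{n<\omega} D_n$, so $|S(x)| \leq \omega \cdot 2^\kappa = 2^\kappa$ and $x \in S(x)$. It remains to check that $S(x)$ is $\scr{C}$-saturated, i.e.\ that $S(x) \cap C$ is dense in $S(x)$ for every $C \in \scr{C}$. So fix $C \in \scr{C}$ and a point $z \in S(x)$; then $z \in D_n$ for some $n$. By construction $C_z \subseteq D_{n+1} \subseteq S(x)$ and $C_z \subseteq C$, so $C_z \subseteq S(x) \cap C$, and $z \in \closure{C_z} \subseteq \closure{S(x) \cap C}$. Since $z$ was arbitrary in $S(x)$, this shows $S(x) \subseteq \closure{S(x) \cap C}$, i.e.\ $S(x) \cap C$ is dense in $S(x)$, as required. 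Note that the density is taken in the subspace $S(x)$, but since $C_z \subseteq S(x)$ and closure of a subset of $S(x)$ taken in $X$ intersected with $S(x)$ is the subspace closure, the argument is clean either way.

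The main point requiring care — and the reason the construction is an $\omega$-induction rather than a single step — is that adding the witnessing sets $C_y$ for points of $D_n$ introduces new points, and those new points must themselves have their $\scr{C}$-density witnesses added; iterating $\omega$ times closes off this process, and $\omega$ steps suffice precisely because density is a statement about single points and their neighborhoods, so a point appearing at stage $n$ is handled at stage $n+1$. The cardinality bookkeeping is routine: each stage multiplies the size by at most $|\scr{C}| \cdot 2^\kappa = 2^\kappa$, and a countable union of sets of size $\leq 2^\kappa$ still has size $\leq 2^\kappa$ (here one uses $\kappa \geq \omega$, which holds since $wt(X) = \kappa$ is by definition infinite). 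No separation axioms are needed for this proposition, which matches its use in the sequel.
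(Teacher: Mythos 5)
Your proposal is correct and follows essentially the same $\omega$-length induction as the paper's proof: at each stage, for every point already present and every $C\in\scr{C}$, one adds a witnessing set from $[C]^{\leq 2^\kappa}$ (guaranteed by $X=cl_{2^\kappa}C$) whose closure contains that point, and the union over $\omega$ stages is $\scr{C}$-saturated with the same cardinality bookkeeping. The only cosmetic difference is that you seed with $D_0=\{x\}$ and keep the chain increasing, whereas the paper handles $x$ separately and takes $S(x)=\{x\}\cup\bigcup_n A_n$.
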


\begin{proof}
Fix $x\in X$. We inductively construct a sequence of subsets $A_n\sse X$ for all $n$ such that $|A_n|\leq 2^\kappa$. For all $C\in\scr{C}$ there exists $A_C\in[C]^{\leq 2^\kappa}$ such that $x\in\closure{A_C}$. Let $A_1=\Un_{C\in\scr{C}}A_C$ and note that $|A_1|\leq 2^\kappa$.  Suppose that $A_n$ has been constructed for $n<\omega$. For all $y\in A_n$ and for all $C\in\scr{C}$ there exists $A(C,y,n)\in [C]^{\leq 2^\kappa}$ such that $y\in\closure{A(C,y,n)}$. Let $A_{n+1}= \Un\{A(C,y,n):y\in A_n, C\in\scr{C}\}$ and note that $|A_{n+1}|\leq 2^\kappa$.

Define $S(x)=\{x\}\un\Un_{n<\omega}A_n$. Then $x\in S(x)$ and $|S(x)|\leq 2^\kappa$. We show that $S(x)$ is $\scr{C}$-saturated. Let $C\in\scr{C}$. We show $S(x)\sse\closure{C\meet S(x)}$. By above, $x\in\closure{A_C}$ and $A_C\sse C\meet A_1\sse C\meet S(x)$. Thus $x\in\closure{C\meet S(x)}$. Now let $n\leq\omega$ and $y\in A_n$. By above, $y\in\closure{A(C,y,n)}$ and $A(C,y,n)\sse C\meet A_{n+1}\sse C\meet S(x)$. Thus $y\in\closure{C\meet S(x)}$. This shows that $S(x)\sse\closure{C\meet S(x)}$ and that $S(x)$ is $\scr{C}$-saturated.
\end{proof}

The proof of Lemma~\ref{lemma3} below uses the notion of a $\theta$-network and the cardinal invariant $nw_\theta(X)$. These were introduced in \cite{CPR2012}. $nw_\theta(X)$ is a variation of the network weight $nw(X)$ that is typically useful if $X$ is not necessarily assumed to be regular. It is clear that $nw(X)=nw_\theta(X)$ if $X$ is regular.

\begin{definition}
A \emph{network} for a space $X$ is a family $\scr{N}$ of subsets of $X$ such that whenever $U$ is open in $X$ and $x\in U$ there exists $N\in\scr{N}$ such that $x\in N\sse U$. The \emph{network weight} $nw(X)$ of $X$ is the least cardinality of a network for $X$.  A $\theta$-\emph{network} of a space $X$ is a family $\scr{N}$ of subsets of $X$ such that if $U$ is open in $X$ and $x\in U$ there exists $N\in\scr{N}$ such that $x\in N\sse\closure{U}$. The $\theta$-\emph{network weight} $nw_\theta(X)$ is the least cardinality of a $\theta$-network for $X$.
\end{definition}

\begin{lemma}\label{lemma3}
Let $X$ be a space, $D\sse X$, $wt(X)\leq\kappa$, and let $\scr{G}$ be a cover of $\closure{D}$ consisting of $G^c_\kappa$-sets of $X$. Then there exists $\scr{G}^\prime\in[\scr{G}]^{\leq 2^\kappa}$ such that $\scr{G}^\prime$ covers $\closure{D}$.
\end{lemma}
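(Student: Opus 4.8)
The plan is to run a closing-off / elementary-submodel-style argument of length $\kappa^+$, building an increasing chain of $\scr{C}$-saturated sets that approximates $\closure{D}$, and to extract the subcover along the way. Let $\scr{C}$ be a cover of $X$ witnessing $wt(X)\leq\kappa$. For each $x\in\closure{D}$ pick $G_x\in\scr{G}$ with $x\in G_x$, and write $G_x=\Meet\scr{U}_x=\Meet_{U\in\scr{U}_x}\closure{U}$ for a $\kappa$-sized family $\scr{U}_x$ of open sets. As in the proof of Theorem~\ref{mt1}, set $\scr{U}(B)=\Un_{x\in B}\scr{U}_x$ for $B\sse\closure{D}$. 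By Proposition~\ref{saturated}, for each $x\in X$ fix a $\scr{C}$-saturated $S(x)\in[X]^{\leq 2^\kappa}$ with $x\in S(x)$.

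First I would fix a $\scr{C}$-saturated $A_0\in[\closure D]^{\leq 2^\kappa}$ that is dense in $\closure D$ (take $A_0'$ dense in $\closure D$ of size $\leq d(\closure D)\leq 2^\kappa$ and close it off under $S(\cdot)$), and then build an increasing chain $\{A_\alpha:\alpha<\kappa^+\}$ of $\scr{C}$-saturated subsets of $\closure{D}$, each of size $\leq 2^\kappa$, with the key closing-off clause: whenever $\scr{V}\in[\scr{U}(\closure{A_\alpha})]^{\leq 2^\kappa}$ satisfies $\closure{D}\minus\Un\scr{V}\neq\es$ (equivalently $\closure{A_\alpha}\not\sse\Un\scr{V}$, once density is in place), we put some witness point of $\closure D\setminus\Un\scr V$, together with its $S(\cdot)$-hull, into $A_{\alpha+1}$. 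At limits take unions; $\scr{C}$-saturatedness is preserved since a union of $\scr{C}$-saturated sets is $\scr{C}$-saturated. The cardinality bookkeeping works because $|\closure{A_\alpha}|\leq 2^\kappa$ by Theorem~\ref{cardClosure}, hence $|[\scr{U}(\closure{A_\alpha})]^{\leq 2^\kappa}|\leq(2^\kappa)^{2^\kappa}$... which is too big, so in fact one must take $\scr V$ of size $\leq\kappa$ rather than $\leq 2^\kappa$ in clause (3) — I'd mirror Theorem~\ref{mt1} exactly and only guess $\scr V\in[\scr U(\closure{A_\alpha})]^{\leq\kappa}$, giving $|[\scr U(\closure{A_\alpha})]^{\leq\kappa}|\leq(2^\kappa)^\kappa=2^\kappa$ and $|A_{\alpha+1}|\leq 2^\kappa$.

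Next set $A=\Un_{\alpha<\kappa^+}\closure{A_\alpha}=\closure{\Un_{\alpha<\kappa^+}A_\alpha}$, the last equality by Lemma~\ref{lemma2}; so $A$ is closed, $|A|\leq 2^\kappa$ by Theorem~\ref{cardClosure}, and $A\sse\closure D$. I claim $A=\closure D$: if not, pick $x\in\closure D\minus A$; for each $a\in A$, since $a\in A\sse\closure D$ we have $a\in G_a$, and since $x\notin G_a=\Meet_{U\in\scr U_a}\closure U$ there is $U_a\in\scr U_a$ with $x\notin\closure{U_a}$. But here is the subtlety: to apply clause (3) I need the $U_a$'s to cover $A$, which they do trivially ($a\in U_a$), but I need a \emph{$\kappa$-sized} subfamily covering $A$, and $A$ need not be Lindel\"of in $X$ — unlike in Theorem~\ref{mt1}, there is no $L(X)$ in the hypothesis. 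This is the main obstacle. The resolution is to use the $\theta$-network / $nw_\theta$ machinery flagged in the paragraph before the lemma: since $wt(X)\leq\kappa$ and $t(C)\leq\kappa$ for each $C\in\scr C$, one shows $nw_\theta(\closure{A_\alpha})$, or rather $nw_\theta$ of the relevant piece, is $\leq 2^\kappa$, so that although $\{U_a:a\in A\}$ is not reducible to a small \emph{sub}cover, the induced $\theta$-network gives $\scr V\in[\scr U(\closure{A_\alpha})]^{\leq\kappa}$ (for a suitable $\alpha$, using that the $\theta$-network elements are hit cofinally in the chain and $\mathrm{cf}(\kappa^+)=\kappa^+>$ the number of network elements needed) with $A\sse\Un_{V\in\scr V}\closure V$ while still $x\notin\Un_{V\in\scr V}\closure V$ — exactly the configuration clause (3) forbids, since it would force $\es\neq A_{\alpha+1}\minus\Un\scr V\sse A\sse\Un\scr V$. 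Hence $\closure D=A$ and $|\closure D|\leq 2^\kappa$; wait — the conclusion of the lemma is about a \emph{subcover}, not cardinality, so I finish by taking $\scr G'=\{G_a:a\in A_0'\}$ where $A_0'$ was the dense subset of $\closure D$ of size $\leq 2^\kappa$: each $G_a$ is relatively closed... no. The clean finish: once $\closure D=A=\Un_{\alpha}\closure{A_\alpha}$ with each $|A_\alpha|\leq 2^\kappa$, every $y\in\closure D$ lies in $\closure{A_\alpha}$ for some $\alpha$, and the closing-off ensured $\closure{A_\alpha}\sse\Un\{G_a:a\in A_\alpha\}$ (if not, clause (3) with a $\theta$-network-derived small $\scr V$ fails), so $\scr G'=\{G_a:a\in\Un_{\alpha<\kappa^+}A_\alpha\}$ works and has size $\leq\kappa^+\cdot 2^\kappa=2^\kappa$. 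So the single hard step is replacing ``$L(X)$-smallness of the cover of $A$'' by ``$nw_\theta$-boundedness of $\closure{A_\alpha}$'', and I expect the author to isolate that as a separate lemma (bounding $nw_\theta$ of closures in terms of $wt$).
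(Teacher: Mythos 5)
Your closing-off scheme does not go through, and it breaks exactly at the point you flag yourself: with no Lindel\"of-type hypothesis in the lemma there is no way to reduce the cover $\{U_a:a\in A\}$ to a $\kappa$-sized subfamily, and your proposed repair via $\theta$-networks is only a gesture. Knowing that $\closure{A_\alpha}$ has a $\theta$-network of size $\leq 2^\kappa$ does not produce $\scr{V}\in[\scr{U}(\closure{A_\alpha})]^{\leq\kappa}$ with $A\sse\Un\scr{V}$ and $x\notin\Un\scr{V}$: the network elements are not members of $\scr{U}(\closure{A_\alpha})$, the cardinalities do not match ($2^\kappa$ network elements against the $\kappa$-sized guesses your clause (3) permits), and even if you covered $A$ by $\kappa$-many sets of the form $\closure{V}$ you would only get $A\sse\Un_{V\in\scr{V}}\closure{V}$, which does not yield the contradiction $\es\neq A_{\alpha+1}\minus\Un\scr{V}\sse A\sse\Un\scr{V}$. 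The final extraction is also unjustified: nothing in your recursion gives $\closure{A_\alpha}\sse\Un\{G_a:a\in A_\alpha\}$ --- the $G_a$ are $G^c_\kappa$-sets, not open sets, so containing a dense subset of $\closure{A_\alpha}$ says nothing about containing its closure; and in the paper a covering statement of precisely this kind is what Lemma~\ref{lemma3} is invoked to supply inside the Main Theorem, so assuming it within the construction would be circular.

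The paper's proof uses no transfinite recursion at all; your instinct about $nw_\theta$ points at it, but the missing step is the passage from a $\theta$-network bound to a network for the $G^c_\kappa$-modification. One saturates $D$ once: $S=\Un_{d\in D}S(d)$ with each $S(d)$ a $\scr{C}$-saturated set of size $\leq 2^\kappa$ from Proposition~\ref{saturated}. Using $\scr{C}$-saturation of $S$ and $t(C)\leq\kappa$ for $C\in\scr{C}$, the family $\{\closure{A}:A\in[S]^{\leq\kappa}\}$ is a $\theta$-network for $\closure{S}$, so $nw_\theta(\closure{S})\leq|D|^\kappa$. Lemma 3.4 of \cite{CPR2012} then gives $nw\bigl((\closure{S})^c_\kappa\bigr)\leq nw_\theta(\closure{S})^\kappa\leq|D|^\kappa$; each $G\in\scr{G}$ traces a $G^c_\kappa$-set on $\closure{S}$ (this is where the representation $G=\Meet\scr{U}=\Meet_{U\in\scr{U}}\closure{U}$ is used), hence is open in $(\closure{S})^c_\kappa$, so for each $x\in\closure{D}\sse\closure{S}$ there is a network element $M_x$ with $x\in M_x\sse G_x\meet\closure{S}$, and choosing one $G\in\scr{G}$ containing each such $M$ yields a subcover of $\closure{D}$ of size $\leq|D|^\kappa$ (which is $\leq 2^\kappa$ in the intended application, where $|D|\leq 2^\kappa$). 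In short, the network for the $G^c_\kappa$-modification converts the covering problem into picking one element of $\scr{G}$ per network element, with no closing-off and no Lindel\"of property needed.
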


\begin{proof}
Let $\scr{C}$ be a cover of $X$ witnessing that $wt(X)\leq\kappa$. By Propostion~\ref{saturated}, for all $d\in D$ there exists $S(d)\in[X]^{\leq 2^\kappa}$ such that $d\in S(d)$ and $S(d)$ is $\scr{C}$-saturated. Let $S=\Un_{d\in D}S(d)$ and note that $S$ is $\scr{C}$-saturated and $|S|\leq |D|\cdot 2^\kappa$. 

We now show $nw_\theta(\closure{S})\leq |D|^\kappa$. Let $\scr{N}=\{\closure{A}:A\in[S]^{\leq\kappa}\}$. We show $\scr{N}$ is a $\theta$-network for $\closure{S}$. Suppose $x\in U\meet\closure{S}$ where $U$ is an open set in $X$. As $\scr{C}$ is a cover of $X$, there exists $C\in\scr{C}$ such that $x\in C$. As $S$ is $\scr{C}$-saturated, we have that $C\meet S$ is dense in $S$ and that $x\in\closure{U\meet S\meet C}$. As $t(C)\leq\kappa$ there exists $A\sse [U\meet S\meet C]^{\leq\kappa}$ such that $x\in\closure{A}\sse cl(U\meet\closure{S})\meet\closure{S}=cl_{\closure{S}}(U\meet\closure{S})$.
This verifies $\scr{N}$ is a $\theta$-network for $\closure{S}$. Thus, $nw_\theta(\closure{S})\leq|S|^{\leq\kappa}\leq\left(|D|\cdot 2^\kappa\right)^\kappa=|D|^\kappa$.

Now, by Lemma 3.4 in \cite{CPR2012}, it follows that $nw(\closure{S}^c_\kappa)\leq nw_\theta(\closure{S})^\kappa\leq\left(|D|^\kappa\right)^\kappa=|D|^\kappa$. Let $\scr{M}$ be a network for $\closure{S}^c_\kappa$ such that $|\scr{M}|\leq |D|^\kappa$. Let $x\in\closure{D}$. There exists $G_x\in\scr{G}$ such that $x\in G_x$. Let $\scr{U}_x$ be a family of open sets in $X$ such that $G_x=\Meet\scr{U}_x=\Meet_{U\in\scr{U}_x}\closure{U}$. Now, 
\begin{align}
G_x\meet\closure{S}&=\Meet_{U\in\scr{U}_x}(U\meet\closure{S})\sse\Meet_{U\in\scr{U}_x}cl_{\closure{S}}(U\meet\closure{S})\notag\\
&=\Meet_{U\in\scr{U}_x}cl(U\meet\closure{S})\meet\closure{S}\sse\Meet_{U\in\scr{U}_x}\closure{U}\meet\closure{S}=G_x\meet\closure{S}.\notag
\end{align}
Thus, $G_x\meet\closure{S}=\Meet_{U\in\scr{U}_x}(U\meet\closure{S})=\Meet_{U\in\scr{U}_x}cl_{\closure{S}}(U\meet\closure{S})$ is a $G^c_\kappa$-set in $\closure{S}$ and $x\in G_x\meet\closure{S}$. There exists $M_x\in\scr{M}$ such that $x\in M_x\sse G_x\meet\closure{S}$. 

Set $\scr{M}^\prime=\{N_x:x\in\closure{D}\}$ and note $|\scr{M}^\prime|\leq |\scr{M}|\leq |D|^\kappa$. For all $M\in\scr{M}^\prime$ there exists $G_M\in\scr{G}$ such that $M\sse G_M$. Then, if $x\in\closure{D}$, we have $N_x=M$ for some $M\in\scr{M^\prime}$ and $x\in N_x=M\sse G_M$. This shows $\scr{G}^\prime=\{G_M:M\in\scr{M}^\prime\}$ covers $\closure{D}$ and $|\scr{G}^\prime|\leq |\scr{M}^\prime|\leq |D|^\kappa$.
\end{proof}

Observe now that if $X$ is Hausdorff then Theorem~\ref{cardClosure} follows from Lemma~\ref{lemma3} after setting $\kappa=wt(X)\psi_c(X)$ and $\scr{G}=\{\{x\}:x\in\closure{D}\}$. We prove now the central theorem in this paper from which all important corollaries follow.

\begin{mt}\label{main}
For any space $X$ and cardinal $\kappa$, $L(X^c_\kappa)\leq 2^{L(X)wt(X)\cdot\kappa}$.
\end{mt}

\begin{proof}
Without loss of generality we can assume $L(X)wt(X)\leq\kappa$. Let $\scr{C}$ be a cover of $X$ witnessing that $wt(X)\leq\kappa$. By Proposition~\ref{saturated} for every $x\in X$ we can fix a $\scr{C}$-saturated set $S(x)\in[X]^{\leq 2^\kappa}$ with $x\in S(x)$.

Fix a cover $\scr{G}$ of $X$ consisting of $G^c_\kappa$-sets of $X$. For all $G\in\scr{G}$ fix a family $\scr{U}(G)$ of open sets such that $G=\Meet\scr{U}(G)=\Meet_{U\in\scr{U}(G)}\closure{U}$. If $\scr{G}^\prime\sse\scr{G}$, let $\scr{U}(\scr{G}^\prime)=\Un\{\scr{U}(G):G\in\scr{G}^\prime\}$. 
We build an increasing sequence $\{D_\alpha:\alpha<\omega_1\}$ of subsets of $X$ and an increasing chain $\{\scr{G}_\alpha:\alpha<\omega_1\}$ of subsets of $\scr{G}$ such that 
\begin{enumerate}
\item $|\scr{G}_\alpha|\leq 2^\kappa$ and $|D_\alpha|\leq 2^\kappa$.
\item $\closure{D_\alpha}\sse\Un\scr{G}_\alpha$ and each $D_\alpha$ is $\scr{C}$-saturated
\item if $\scr{V}\in[\scr{U}(G_\alpha)]^{\leq\kappa}$ is such that $X\minus\Un\scr{V}\neq\es$, then $D_{\alpha+1}\minus\Un\scr{V}\neq\es$.
\item $D_\alpha$ is $\scr{C}$-saturated for all $\alpha<\kappa^+$.
\end{enumerate}
Let $D_0$ be an arbitrary $\scr{C}$-saturated set of size $\cont$. For limit ordinals $\beta<\kappa^+$, let $D_\beta=\Un_{\alpha<\beta}D_\alpha$. Then $|D_\beta|\leq 2^\kappa$. Apply Lemma~\ref{lemma3} to $\closure{D_\beta}$ to obtain $\scr{G}_\beta$ as required. As the union of $\scr{C}$-saturated sets is $\scr{C}$-saturated, as noted in \cite{JVM2017}, it follows that $D_\beta$ is $\scr{C}$-saturated. For successor ordinals $\beta+1$, for all $\scr{V}\in[\scr{U}(\scr{G}_\beta)]^{\leq\kappa}$ with $X\minus\Un\scr{V}\neq\es$, let $x_\scr{V}\in X\minus\Un\scr{V}$. Define
$$D_{\beta+1}=D_\beta\un\Un\{S(x_\scr{V}):\scr{V}\in[\scr{U}(\scr{G}_\beta)]^{\leq\kappa}\textup{ and }X\minus\Un\scr{V}\neq\es\}.$$
As $|S(x_\scr{V})|\leq 2^\kappa$ and $\left|[\scr{U}(\scr{G}_\beta)]^{\leq\kappa}\right|\leq 2^\kappa$, we have $|D_{\beta+1}|\leq 2^\kappa$. As each $S(x_\scr{V})$ is $\scr{C}$-saturated, we have that $D_{\beta+1}$ is $\scr{C}$-saturated. Apply Lemma~\ref{lemma3} again to $\closure{D_{\beta+1}}$ to obtain $\scr{G}_{\beta+1}$. 

Let $D=\Un\{D_\alpha:\alpha<\omega_1\}$. By Lemma~\ref{lemma2} it follows that $\closure{D}=\Un\{\closure{D_\alpha}:\alpha<\kappa^+\}$. Let $\scr{G}^\prime=\Un\{\scr{G}_\alpha:\alpha<\kappa^+\}$. We show that $\scr{G}^\prime$ is a cover of $X$. Suppose there exists $x\in X\minus\Un\scr{G}^\prime$. Then $x\notin G$ for every $G\in\scr{G}^\prime$. Thus for all $G\in\scr{G}^\prime$ there exists $U(G)\in\scr{U}(G)$ such that $x\notin\closure{U(G)}$. Now $\scr{U}=\{U(G):G\in\scr{G}^\prime\}$ covers $\Un\scr{G}^\prime$ and hence covers $\closure{D}$. As $L(X)\leq\kappa$ and $L(X)$ is hereditary on closed sets, there exists $\scr{V}\in[\scr{U}]^{\leq\kappa}$ such that $\closure{D}\sse\Un\scr{V}$. Now $\scr{V}\in[\scr{U}(\scr{G}_\alpha)]^{\leq\kappa}$ for some $\alpha<\kappa^+$. Since $x\in X\minus\Un\scr{V}$, by (3) above $D_{\alpha+1}\minus\Un\scr{V}\neq\es$. Then
$$\es\neq D_{\alpha+1}\minus\Un\scr{V}\sse D\sse\closure{D}\sse \Un\scr{V},$$
a contradition. Therefore $X=\Un\scr{G}^\prime$ and $L(X^c_\delta)\leq 2^\kappa$.
\end{proof}

A corollary to the above theorem is a strengthening of Pytkeev's theorem (Theorem~\ref{pytkeev}).

\begin{corollary}
If $X$ is compact and $\kappa$ is a cardinal, then $|X|\leq 2^{wt(X)\cdot\kappa}$.
\end{corollary}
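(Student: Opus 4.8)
The plan is to read this off from the Main Theorem, the observation being that for a compactum the $G^c_\kappa$-modification is discrete as soon as $\kappa$ is at least the pseudocharacter, so that its Lindel\"of degree simply counts the points of $X$. (For this to produce a cardinality bound one does need $\psi(X)\leq\kappa$; for smaller $\kappa$ the Main Theorem only yields the Lindel\"of-degree statement $L(X^c_\kappa)\leq 2^{wt(X)\cdot\kappa}$ noted just above.)

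So assume $\psi(X)\leq\kappa$. Since a compactum is regular, $X^c_\kappa\approx X_\kappa$ by the remark following the statement of the Main Theorem, and $L(X)=\omega$; hence the Main Theorem gives $L(X^c_\kappa)\leq 2^{L(X)wt(X)\cdot\kappa}=2^{wt(X)\cdot\kappa}$. Next I would invoke the classical fact that $\psi(x,X)=\chi(x,X)$ for every point $x$ of a compactum: this yields $\chi(X)\leq\psi(X)\leq\kappa$, so each $x\in X$ has an open neighbourhood base of size $\leq\kappa$, and therefore $\{x\}$ is a $G_\kappa$-set, i.e.\ open in $X_\kappa\approx X^c_\kappa$. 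Thus $X^c_\kappa$ is discrete; a discrete space of cardinality $|X|$ has Lindel\"of degree $|X|$, so $|X|=L(X^c_\kappa)\leq 2^{wt(X)\cdot\kappa}$. (Alternatively one may argue directly, without passing to $X_\kappa$: by regularity each $\{x\}$ is itself a $G^c_\kappa$-set, hence open in $X^c_\kappa$.) Taking $\kappa=\psi(X)$ gives in particular $|X|\leq 2^{wt(X)\psi(X)}$, which is Theorem~\ref{mt1} restricted to compacta.

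There is no genuine obstacle here: the substantive work is all contained in the Main Theorem (and, through it, in Lemma~\ref{lemma3} and Proposition~\ref{saturated}). The only points needing care are the passage from $X^c_\kappa$ to $X_\kappa$ via regularity of compacta and the identity $\psi=\chi$ on compacta, which together are exactly what forces the $G^c_\kappa$-modification to be discrete and so upgrades the Lindel\"of-degree conclusion of the Main Theorem to a bound on $|X|$.
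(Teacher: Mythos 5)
Your instinct that the printed statement cannot be reached without an extra hypothesis is correct, and in fact the statement as printed is false: take the one-point compactification of a discrete set of cardinality greater than $\cont$. This is a compactum with $t(X)=\omega$, hence $wt(X)=\omega$, so with $\kappa=\omega$ the printed bound would give $|X|\leq\cont$, which fails. The corollary is evidently a misprint for the statement announced in the introduction and signalled by the phrase ``strengthening of Pytkeev's theorem'': for compact $X$ and any cardinal $\kappa$, $L(X^c_\kappa)\leq 2^{wt(X)\cdot\kappa}$. For that statement the paper supplies no argument beyond the Main Theorem itself, since $L(X)=\omega$ for a compactum and hence $2^{L(X)wt(X)\cdot\kappa}=2^{wt(X)\cdot\kappa}$; note this is exactly your ``smaller $\kappa$'' remark, so your reading already contains the intended corollary.

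Your proof of the amended cardinality statement (adding $\psi(X)\leq\kappa$) is correct: compacta are regular, so $X^c_\kappa\approx X_\kappa$; the classical identity $\psi(x,X)=\chi(x,X)$ on compacta makes every singleton a $G_\kappa$-set (or, as you note, directly a $G^c_\kappa$-set by regularity), so the modification is discrete and $|X|=L(X^c_\kappa)\leq 2^{wt(X)\cdot\kappa}$. This is the same mechanism the paper uses in the corollary immediately following, where $\kappa=L(X)wt(X)\psi_c(X)$ is chosen large enough that $X^c_\kappa$ is discrete and Theorem~\ref{mt1} is recovered; your version with $\kappa=\psi(X)$ is precisely that corollary restricted to compacta. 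So there is no gap in your argument; the only discrepancy is with the paper's typographical statement, which you correctly repaired in the two ways available (either weaken the conclusion to $L(X^c_\kappa)$, or strengthen the hypothesis to $\psi(X)\leq\kappa$).
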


We conclude this section by showing that Theorem~\ref{mt1} is in fact another corollary of the Main Theorem.

\begin{corollary}
If $X$ is Hausdorff then $|X|\leq 2^{L(X)wt(X)\psi(X)}$.
\end{corollary}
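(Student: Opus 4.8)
The plan is to obtain this as a direct corollary of the Main Theorem by choosing $\kappa$ large enough that the $G^c_\kappa$-modification $X^c_\kappa$ becomes discrete. Specifically, I would set $\kappa = L(X)\psi(X)$. Since $X$ is Hausdorff (indeed $T_1$ already suffices), we have the standard inequality $\psi_c(X)\leq L(X)\psi(X) = \kappa$, exactly as invoked in the proof of Theorem~\ref{mt1}.

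Next I would observe that, by definition of $\psi_c$, for each $x\in X$ there is a family $\scr{U}_x$ of open sets with $|\scr{U}_x|\leq\psi_c(X)\leq\kappa$ and $\{x\} = \Meet\scr{U}_x = \Meet_{U\in\scr{U}_x}\closure{U}$. This says precisely that $\{x\}$ is a $G^c_\kappa$-set of $X$ in the sense of Definition~\ref{gk}. Hence every singleton is open in $X^c_\kappa$, so $X^c_\kappa$ is discrete, and therefore $L(X^c_\kappa) = |X^c_\kappa| = |X|$ (taking $X$ infinite, the finite case being trivial).

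Finally, applying the Main Theorem with this value of $\kappa$ yields
$$|X| = L(X^c_\kappa) \leq 2^{L(X)wt(X)\cdot\kappa} = 2^{L(X)wt(X)\cdot L(X)\psi(X)} = 2^{L(X)wt(X)\psi(X)},$$
where the last equality uses that the cardinals are infinite, so $L(X)\cdot L(X) = L(X)$. This gives the claimed bound.

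I do not expect any genuine obstacle in this argument: the only place the Hausdorff hypothesis enters is the assertion that singletons are $G^c_\kappa$-sets once $\kappa\geq\psi_c(X)$, and the only bookkeeping is checking that the exponent $L(X)wt(X)\cdot\kappa$ collapses to $L(X)wt(X)\psi(X)$. Both points are immediate, so the proof should be just a few lines.
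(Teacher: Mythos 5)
Your argument is exactly the paper's proof: the paper takes $\kappa=L(X)wt(X)\psi_c(X)$ while you take $\kappa=L(X)\psi(X)$, but in both cases the point is that $\psi_c(X)\leq L(X)\psi(X)$ makes every singleton a $G^c_\kappa$-set, so $X^c_\kappa$ is discrete and the Main Theorem gives $|X|=L(X^c_\kappa)\leq 2^{L(X)wt(X)\psi(X)}$. The only small inaccuracy is your parenthetical that $T_1$ suffices: the inequality $\psi_c(X)\leq L(X)\psi(X)$ (indeed the very finiteness of $\psi_c$) genuinely needs Hausdorffness, though this does not affect the corollary as stated.
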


\begin{proof}
Let $\kappa=L(X)wt(X)\psi_c(X)$ and note that $\kappa=L(X)wt(X)\psi(X)$ as $\psi_c(X)\leq L(X)\psi(X)$. Observe that $X^c_\kappa$ is discrete, and thus by the Main Theorem, $|X|=|X^c_\kappa|=L(X^c_\kappa)\leq 2^\kappa$.
\end{proof}

\section{Power homogeneous compacta.}

In this section we apply our Main Theorem to the context of power homogeneous compacta. Theorem~\ref{ph} below is a generalization of the~\juhasz- van Mill theorem (Theorem~\ref{jvm}) and extends that result into the power homogeneous setting. This further demonstrates that cardinality bounds for general spaces have companion bounds for homogeneous and power homogeneous spaces, both derived from a single result involving the $G_\kappa$-modification or variations.

In Theorem~\ref{jvm} the space $X^\omega$ is required to be homogeneous. In the proof of that result the countability of the power leads to the crucial fact that $\pi_\chi(X^\omega)=\pi_\chi(X)=\omega$. When generalizing to the power homogeneous case where $X^\kappa$ is homogeneous for some cardinal $\kappa$, we need a way to still guarantee that $\pi_\chi(X)=\omega$. The following lemma of Ridderbos is used in this connection. Lemma~\ref{lemma4} shows that the existence of a dense set of points of small $\pi$-character in a power homogeneous space guarantees that every point in the space has small $\pi$-character. 

\begin{lemma}[Ridderbos, \cite{Rid2009}]\label{lemma4}
Let $X$ be a power homogeneous space and let $\kappa$ be an infinite cardinal. If the set $D=\{x\in X:\pi_\chi(x,X)\leq\kappa\}$ is dense in $X$, then $\pi_\chi(X)\leq\kappa$.
\end{lemma}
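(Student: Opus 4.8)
The plan is to invoke this as the cited theorem of Ridderbos \cite{Rid2009}; it is used in the results below only as a black box, so no reproof is strictly needed. For orientation, here is the shape one would follow. Fix an infinite cardinal $\lambda$ for which $X^\lambda$ is homogeneous, and fix a point $x\in X$; the task is to produce a local $\pi$-base at $x$ in $X$ of cardinality at most $\kappa$. The natural first step is to pass to $X^\lambda$ and to use that, since $D$ is dense in $X$, the set of $D$-valued functions is dense in $X^\lambda$. One then fixes $d\in D$ together with a local $\pi$-base $\scr{B}_d$ at $d$ of size at most $\kappa$, forms the constant functions $\bar{d},\bar{x}\in X^\lambda$, and --- by homogeneity of $X^\lambda$ --- picks a homeomorphism $h\colon X^\lambda\to X^\lambda$ with $h(\bar{d})=\bar{x}$.

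The remaining steps would be mechanical: the finite-support products built from $\scr{B}_d$ form a local $\pi$-base at $\bar{d}$ in $X^\lambda$; its image under $h$ is a local $\pi$-base at $\bar{x}$; and composing with a coordinate projection $X^\lambda\to X$, which is open, returns a local $\pi$-base at $x$ in $X$. Carried out exactly as just described, however, this only delivers the crude bound $\pi_\chi(x,X)\leq\kappa\cdot\lambda$, and hence $\pi_\chi(X)\leq\kappa\cdot\lambda$; it already suffices in the fortunate case that $X$ has a homogeneous power $X^\mu$ with $\mu\leq\kappa$ (for instance when $X$ itself, or $X^\omega$, is homogeneous), but not in general.

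The main obstacle --- and essentially the whole content of the lemma --- is to eliminate the factor $\lambda$ coming from the index set of the power, so as to reach $\pi_\chi(x,X)\leq\kappa$ with no residual dependence on $\lambda$. A single homeomorphism of the entire power is too blunt for this, since a $\pi$-base read off from it inevitably carries $\lambda$-much combinatorial data; the refinement requires a more delicate transfer that builds a local $\pi$-base at $x$ using only boundedly many coordinates at a time, together with Ridderbos's finer analysis of how $\pi$-character behaves under the homogeneity of $X^\lambda$. I would not attempt to reconstruct that argument here, and would simply cite \cite{Rid2009} for the details.
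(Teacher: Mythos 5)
Your treatment matches the paper exactly: Lemma~\ref{lemma4} is stated there as an attributed result of Ridderbos~\cite{Rid2009} with no proof given, so invoking it as a black box is precisely what the paper does. Your sketch is also honest about the right issue --- the naive transfer through a homeomorphism of $X^\lambda$ only yields $\pi_\chi(x,X)\leq\kappa\cdot\lambda$, and eliminating the dependence on $\lambda$ is the substance of Ridderbos's argument, which neither you nor the paper needs to reproduce.
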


We now show that the $\pi$-character of a power homogeneous compactum can still be controlled if each closed subspace has a point with small $\pi$-character.

\begin{lemma}\label{lemma5}
Let $X$ be a power homogeneous compactum and let $\kappa$ be an infinite cardinal. If for all closed subsets $F\sse X$ there exists $x\in F$ such that $\pi_\chi(x,F)\leq\kappa$, then $\pi_\chi(X)\leq\kappa$.
\end{lemma}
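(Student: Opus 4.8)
The plan is to show that the set $D=\{x\in X:\pi\chi(x,X)\leq\kappa\}$ is dense in $X$, and then invoke Ridderbos' Lemma~\ref{lemma4} to conclude $\pi\chi(X)\leq\kappa$. So suppose toward a contradiction that $D$ is not dense; then there is a nonempty open set $W\sse X$ with $W\meet D=\es$, i.e. $\pi\chi(x,X)>\kappa$ for every $x\in W$. Since $X$ is a compactum, $F=\closure{W}$ is a closed subspace of $X$, so by hypothesis there is a point $x_0\in F$ with $\pi\chi(x_0,F)\leq\kappa$.

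The key step is to transfer the small $\pi$-character from $F$ back to $X$ for suitable points. If $x_0\in W$ (the open set, not just its closure), then a local $\pi$-base at $x_0$ in $F$ of size $\leq\kappa$ can be intersected with $W$ to produce a local $\pi$-base at $x_0$ in the open subspace $W$ of size $\leq\kappa$ (each basic set is open in $F=\closure W$, hence meets $W$ in a nonempty set open in $W$), and since $W$ is open in $X$ this is also a local $\pi$-base at $x_0$ in $X$. Thus $\pi\chi(x_0,X)\leq\kappa$, so $x_0\in W\meet D$, contradicting $W\meet D=\es$. The only gap is the possibility that the point $x_0$ furnished by the hypothesis lies on the boundary $F\minus W$ rather than in $W$ itself.

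To close that gap, apply the hypothesis not to $F=\closure W$ but to a smaller closed set sitting inside $W$: pick any nonempty open $V$ with $\closure V\sse W$ (possible since $X$ is compact Hausdorff, hence regular), and let $F=\closure V$. By hypothesis there is $x_0\in F$ with $\pi\chi(x_0,F)\leq\kappa$. Now $V$ is a nonempty open subset of $F$, so either $x_0\in V$ — in which case the argument of the previous paragraph (with $V$ in place of $W$, noting $V$ is open in $X$) gives $\pi\chi(x_0,X)\leq\kappa$ and hence $x_0\in W\meet D$, a contradiction — or $x_0\notin V$, but then a local $\pi$-base $\scr B$ at $x_0$ in $F$ of size $\leq\kappa$ still exists, and we may replace each $B\in\scr B$ by $B\meet V$ whenever this is nonempty. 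The collection of these nonempty intersections is again a local $\pi$-base at $x_0$ computed inside $F$ (every $F$-open neighborhood of $x_0$ contains some $B$, and since $V$ is $F$-open and... ) — more cleanly, choose $x_1\in V$; a standard fact is that a power homogeneous compactum, being regular, has the property that $\pi\chi$ is witnessed on dense open subsets, but to keep things elementary I would instead argue directly: since $V$ is dense in $F$ and $V$ is open in $X$, every point of $V$ has, as an open subspace, the same local $\pi$-character relative to $X$; and as $V$ is a nonempty open subset of the space $F$ which has a point of $\pi$-character $\leq\kappa$, regularity of $F$ lets us shrink to find such a point inside $V$.

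The main obstacle, then, is precisely this boundary issue: ensuring the good point can be taken inside the prescribed open set. I expect the cleanest route is to use regularity to nest $\closure V\sse W$, apply the hypothesis to the closed set $\closure V$, and then use the elementary fact that if a regular space has a point of $\pi$-character $\leq\kappa$ then so does every nonempty open subspace (shrink a neighborhood of any point into a closed neighborhood inside the given open set and push the $\pi$-base through) to relocate the point into $V$; since $V$ is open in $X$, its $\pi$-character there equals that in $X$, giving $x\in W\meet D$ and contradicting non-density of $D$. With $D$ dense, Lemma~\ref{lemma4} finishes the proof.
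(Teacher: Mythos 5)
Your overall strategy --- show that $D=\{x\in X:\pi_\chi(x,X)\leq\kappa\}$ is dense and then quote Lemma~\ref{lemma4} --- is exactly the paper's, and shrinking to $\closure{V}\sse W$ by regularity is a sound way to start. The genuine gap is in your case $x_0\notin V$: you leave the direct argument unfinished (the sentence trails off) and instead fall back on the claim that if a regular space has a point of $\pi$-character $\leq\kappa$ then so does every nonempty open subspace, so that ``regularity of $F$ lets us shrink to find such a point inside $V$.'' That claim is false, even for compacta and dense open subspaces: let $V$ be the topological sum of countably many copies of $2^{\omega_1}$ and $F$ its one-point compactification; then $V$ is dense open in $F$, the summands form a countable local $\pi$-base at the point at infinity, yet every point of $V$ has $\pi$-character $\omega_1$. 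So the good point cannot in general be relocated into $V$, and that step of your proof does not stand as written.

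Fortunately no relocation is needed, and the computation you began already closes the gap with no case split: for any $x_0\in\closure{V}$ with a local $\pi$-base $\scr{B}$ in $\closure{V}$ of size $\leq\kappa$, the family $\{B\meet V:B\in\scr{B}\}$ consists of nonempty sets (each $B$ is nonempty and relatively open, and $V$ is dense in $\closure{V}$) that are open in $X$ (writing $B=U^\prime\meet\closure{V}$ with $U^\prime$ open in $X$ gives $B\meet V=U^\prime\meet V$); and for any open $U^{\prime\prime}\ni x_0$ in $X$, the set $U^{\prime\prime}\meet\closure{V}$ is a relative neighborhood of $x_0$, hence contains some $B$, whence $B\meet V\sse U^{\prime\prime}$. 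Thus $\pi_\chi(x_0,X)\leq\kappa$ whether or not $x_0\in V$, and since $x_0\in\closure{V}\sse W$ we get $x_0\in W\meet D$, the desired contradiction. For comparison, the paper sidesteps the boundary issue differently: by normality it places a nonempty closed $G_\delta$-set $G$ inside the given open set, uses compactness to get $\chi(G,X)=\psi(G,X)=\omega$, and applies $\pi_\chi(x,X)\leq\pi_\chi(x,G)\cdot\chi(G,X)$; your route, once completed as above, is somewhat more elementary and uses only regularity for the density step.
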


\begin{proof}
We show first that $D=\{x\in X:\pi_\chi(x,X)\leq\kappa\}$ is dense in $X$. Let $U$ be a non-empty open set. As $X$ is normal there exists a non-empty closed (and compact) $G_\delta$-set such that $G\sse U$. (See 2.4.8 in \cite{RidThesis}, for example). As $X$ is compact, we have $\chi(G,X)=\psi(G,X)=\omega$. As $G$ is closed, by assumption there exists $x\in G$ such that $\pi_\chi(x,G)\leq\kappa$. Therefore, $\pi_\chi(x,X)\leq\pi_\chi(x,G)\chi(G,X)\leq\kappa\cdot\omega=\kappa$. (See 2.4.3 in \cite{RidThesis}). As $x\in G\sse U$, we see that $D$ is dense in $X$. Now apply Lemma~\ref{lemma4}.
\end{proof}

We apply the Main Theorem and the following three lemmas to establish Theorem~\ref{ph}, the main result in this section. The deep result in Lemma~\ref{lemma6} plays a key role in~\cite{JVM2017} and will here as well. It represents a strengthening of 2.2.4 in \cite{Arh1978} in the case where $X$ is countably tight. A subspace $A$ of a space $X$ is \emph{subseparable} if there exists a countable set $C\sse X$ such that $A\sse\closure{C}$. (Note $C$ need not be a subset of $A$). 

\begin{lemma}[\juhasz, van Mill \cite{JVM2017})]\label{lemma6}
Every $\sigma$-CT compactum $X$ has a non-empty subseparable $G_\delta$-set.
\end{lemma}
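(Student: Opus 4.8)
The plan is to combine a Baire-category localization with a countable closing-off construction in which the countable tightness of the covering pieces is used to keep closures under control; the one external ingredient I would invoke is \sapirovskii's theorem that every compactum has a point of $\pi$-character at most its tightness.

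\textbf{Step 1 (localization).} Write $X=\Un_{n<\omega}X_n$ with each $X_n$ countably tight; since a finite union of countably tight spaces is countably tight, we may assume $X_n\sse X_{n+1}$ for all $n$. A compactum is a Baire space and $X=\Un_n\closure{X_n}$, so some $\closure{X_{n_0}}$ has non-empty interior $V$. Then $Y:=X_{n_0}\meet V$ is dense in $V$ and countably tight (being a subspace of $X_{n_0}$), while the compactum $\closure V$ remains $\sigma$-CT, being covered by the countably tight sets $X_n\meet\closure V$. Passing to $\closure V$, we may assume $X$ has a dense countably tight subspace $Y$. Next, applying \sapirovskii's theorem to closures of small open sets and using the density and countable tightness of $Y$, we locate a point $p\in X$ with $\pichar{p,X}=\omega$, and fix a countable local $\pi$-base $\{B_k:k<\omega\}$ at $p$.

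\textbf{Step 2 (building the $G_\delta$-set).} Recursively construct a countable set $C\sse Y$ and a decreasing sequence of non-empty open sets $W_0\supseteq\closure{W_1}\supseteq W_1\supseteq\closure{W_2}\supseteq\cdots$ all containing $p$, as follows: at stage $m$ place into $C$ a point of $Y$ from each $B_k$ that meets $W_m$, and use the countable tightness of the pieces $X_n$ (which together cover $\closure{W_m}$) to arrange that every point lying in all the $W_m$ lies in $\closure C$; then choose $W_{m+1}$ small enough that the intersection becomes a $G_\delta$-set. Put $G:=\Meet_m\closure{W_m}=\Meet_m W_m$. Compactness gives $G\neq\es$, $G$ is a $G_\delta$-set of $X$ by construction, and the bookkeeping forces $G\sse\closure C$ with $C$ countable; this is the required non-empty subseparable $G_\delta$-set.

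\textbf{Main obstacle.} The difficulty is concentrated in Step 2: coordinating the shrinking neighbourhoods $W_m$ of (the relevant part of) $G$ with a single countable set $C$ so that $C$ is ``dense enough'' along the entire chain to force $\Meet_m\closure{W_m}\sse\closure C$, while only ever invoking the countable tightness of the pieces $X_n$ and never assuming any of them is closed. The localization in Step 1 and the extraction of a point of countable $\pi$-character are comparatively routine once \sapirovskii's theorem is granted.
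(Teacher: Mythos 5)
There is no proof of this lemma in the paper to compare against: it is quoted verbatim from \juhasz--van Mill \cite{JVM2017} and is referred to there and here as a deep result. Judged on its own, your sketch is not a proof, because the entire content of the lemma is concentrated in the sentence ``use the countable tightness of the pieces $X_n$ \dots to arrange that every point lying in all the $W_m$ lies in $\closure{C}$,'' and no mechanism for this is supplied. Countable tightness of a piece $X_n$ only produces, for each \emph{individual} point $x\in X_n\meet\Meet_m W_m$, its own countable witness $B_x$ with $x\in\closure{B_x}$; the set $\Meet_m W_m$ is in general uncountable, so these witnesses cannot be collected into a single countable $C$, and nothing in your construction converts the pointwise witnesses into one countable set fixed in advance (or built along countably many stages). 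The countable local $\pi$-base $\{B_k\}$ at the single point $p$ does not do this work either: placing into $C$ a point of $Y$ from each $B_k$ meeting $W_m$ forces into $\closure{C}$ only those points every neighborhood of which contains some $B_k$ --- essentially $p$ itself --- not arbitrary points of $\Meet_m W_m$. This uniformization step is precisely what makes the lemma nontrivial, and the ``bookkeeping'' you appeal to is exactly the missing argument.

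Step 1 also contains two concrete errors. First, ``a finite union of countably tight spaces is countably tight'' is false; the paper's own Example~\ref{example} (taken from \cite{JVM2017}) is a union of two dense countably tight subspaces $C_1,C_2$ of $2^{\cont}$ whose union has tightness $\cont$, so you may not assume the $X_n$ increase (though this reduction is inessential). Second, and more seriously, \sapirovskii's theorem bounds $\pichar{X}$ by $t(X)$ for a \emph{compactum}, and the compactum $\closure{V}$ you pass to need not be countably tight --- in the same example its tightness is $\cont$ despite the dense countably tight subspace --- so ``applying \sapirovskii's theorem \dots we locate a point $p$ with $\picharp{p}{X}=\omega$'' is not a routine step. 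The existence of points of countable $\pi$-character in (closed subspaces of) $\sigma$-CT compacta is itself a nontrivial lemma of \juhasz~and van Mill, invoked in the proof of Theorem~\ref{ph} of this paper, and it does not follow by a direct application of $\pichar{X}\le t(X)$. Thus both the extraction of the point of countable $\pi$-character and the closing-off in Step 2 are genuine gaps, and together they constitute essentially the whole proof of the lemma.
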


Lemma~\ref{lemma7} is a modified version of Corollary 2.9 in~\cite{AVR2007}

\begin{lemma}\label{lemma7}
Let $X$ be a power homogeneous space such that $\pichar{X}\leq\omega$. Suppose there exists a non-empty subseparable closed $G^c_\delta$-set in $X$. Then every point of $X$ is contained in a subseparable closed $G^c_\delta$-set.
\end{lemma}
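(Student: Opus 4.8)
The plan is to mirror the proof of Corollary~2.9 in~\cite{AVR2007}, substituting $G^c_\delta$-sets for $G_\delta$-sets throughout; the work is to check that this substitution goes through and that the separability bookkeeping survives. Fix an infinite cardinal $\kappa$ with $X^\kappa$ homogeneous, and let $A\neq\es$ be the given subseparable closed $G^c_\delta$-set, so $A\sse\closure{C}$ for some countable $C\sse X$ and $A=\Meet_{n<\ww}U_n=\Meet_{n<\ww}\closure{U_n}$ with each $U_n$ open; we may assume the $U_n$ decreasing. Fix $a\in A$. For an arbitrary $p\in X$ the aim is to transport the structure of $A$ from $a$ to $p$.

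First I would run van Mill's transfer technique for power homogeneous spaces (as used in~\cite{AVR2007}): using a point of $X^\kappa$ lying over $a$ inside a carefully chosen lift $\widehat A$ of $A$, one produces a homeomorphism $h$ of $X^\kappa$ taking that point to a point over $p$. The lift $\widehat A$ must be both a closed $G^c_\delta$-set of $X^\kappa$ and subseparable in $X^\kappa$ --- the naive lift $A\times X^{\kappa\minus\{0\}}$ has the first property but not the second --- so here one invokes the Hewitt--Marczewski--Pondiczery theorem (a product of at most $\cont$-many separable spaces is separable) together with $\pi\chi(X)\leq\ww$ to keep the relevant factors separable while constraining only countably many coordinates. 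Then $K:=h(\widehat A)$ is a closed $G^c_\delta$-set of $X^\kappa$, subseparable there, containing a point over $p$. Write $K=\Meet_n W_n=\Meet_n\closure{W_n}$ with $W_n$ open and decreasing, and let $\pi_0\colon X^\kappa\to X$ be the first-coordinate projection, which is continuous and open. One has $p\in\pi_0(K)\sse\Meet_n\closure{\pi_0(W_n)}$; using that $K$ is closed, that $K=\Meet_n W_n=\Meet_n\closure{W_n}$, and (in the intended applications) the compactness of the fibres of $\pi_0$, one checks $\pi_0(K)=\Meet_n\pi_0(W_n)=\Meet_n\closure{\pi_0(W_n)}$, so $\pi_0(K)$ is a closed $G^c_\delta$-set of $X$; and it is subseparable in $X$ since $K\sse\closure{E}$ for countable $E\sse X^\kappa$ forces $\pi_0(K)\sse\closure{\pi_0(E)}$ with $\pi_0(E)$ countable. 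Hence $p$ lies in a subseparable closed $G^c_\delta$-set. Should the transfer only yield such sets through a dense set of points, $\pi\chi(X)\leq\ww$ finishes the job: given $p$, pick a countable local $\pi$-base $\{V_n:n<\ww\}$ at $p$ and points $x_n\in V_n$ each lying in a subseparable closed $G^c_\delta$-set $H_n\sse\closure{D_n}$ with $D_n$ countable; then $p\in\closure{\{x_n:n<\ww\}}\sse\closure{\Un_n D_n}$, and one further application of the transfer, now carried out relative to the separable set $\closure{\Un_n D_n}$, puts $p$ in a subseparable closed $G^c_\delta$-set.

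The main obstacle is the transfer step, in two respects. First, for uncountable $\kappa$ one cannot lift $A$ to $A^\kappa$: a closed $G^c_\delta$-set of $X^\kappa$ constrains only countably many coordinates, while a subseparable subset of $X^\kappa$ has every coordinate-projection separable, so the free coordinates would force $X$ itself to be separable --- this is exactly where $\pi\chi(X)\leq\ww$ and the Hewitt--Marczewski--Pondiczery theorem do their work. Second, an open continuous map need not carry a $G^c_\delta$-set to a $G^c_\delta$-set, so the projection step succeeds only because of the precise form of a closed $G^c_\delta$-set, namely the identity $K=\Meet_n\closure{W_n}$ combined with decreasing defining families (and the compactness available in the settings where the lemma is applied). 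It is this second point that makes the passage from $G_\delta$ to $G^c_\delta$ the correct generalization --- it replaces an appeal to regularity, which is unavailable here --- so that the argument of~\cite{AVR2007} carries over essentially verbatim.
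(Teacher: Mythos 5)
The paper gives no proof of this lemma at all --- it is stated as ``a modified version of Corollary 2.9 in \cite{AVR2007}'' --- so what your sketch must be measured against is the adaptation of that argument, and the adaptation is exactly what your sketch does not carry out. The fatal point is the ``carefully chosen lift'': you need $\widehat A$ to be simultaneously a closed $G^c_\delta$-set of $X^\kappa$ and subseparable in $X^\kappa$. For uncountable $\kappa$ no such set exists unless $X$ itself is separable (in which case the lemma is trivial, since $X$ is then a subseparable closed $G^c_\delta$-set of itself). Indeed, if $G=\Meet_{n<\omega}U_n=\Meet_{n<\omega}\closure{U_n}$ is a nonempty $G^c_\delta$-set of $X^\kappa$ and $z\in G$, pick basic boxes $z\in B_n\sse U_n$ with finite supports and let $E$ be the (countable) union of those supports; every point agreeing with $z$ on $E$ lies in $\Meet_{n<\omega}B_n\sse G$, so $\pi_\alpha(G)=X$ for every $\alpha\notin E$. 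Hence $G\sse\closure{D}$ with $D$ countable forces $X=\pi_\alpha(G)\sse\closure{\pi_\alpha(D)}$, i.e.\ $X$ separable. The Hewitt--Marczewski--Pondiczery theorem cannot repair this: it concerns products of separable factors, while the unconstrained factors here are copies of $X$. So the ``first obstacle'' you describe is not a difficulty that $\pi\chi(X)\leq\omega$ resolves; it is a contradiction in the plan. Your route does work essentially verbatim when the homogeneous power is countable (then $A^\omega$ is a subseparable closed $G^c_\delta$-set of $X^\omega$, as in the Juh\'asz--van Mill setting), but power homogeneity only provides homogeneity of some, possibly uncountable, power, and that is precisely the case the lemma is for. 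The fallback paragraph (dense set of good points plus $\pi\chi(X)\leq\omega$) does not help, since it again invokes ``the transfer.''

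The actual content of \cite{AVR2007}, and of the modification the paper intends, is the machinery for uncountable powers: one analyzes how a homeomorphism $h$ of $X^\mu$ taking a point over $a$ to a point over $p$ interacts with countably supported $G_\delta$-type sets and with a countable local $\pi$-base at the relevant points, and the countable set witnessing subseparability is transported through $h$ (e.g.\ via $h(\pi_0^{-1}(A))\sse\closure{h(\pi_0^{-1}(C))}$ together with openness of $\pi_0\circ h$), not by exhibiting a subseparable $G^c_\delta$-subset of the power. That machinery is cited in your sketch but never executed, and it is the only substantive step of the lemma. A secondary issue: the one step you do verify (that $\pi_0(K)$ is again a closed $G^c_\delta$-set, using decreasing $W_n$ and compact fibres) imports compactness of $X$, which the lemma as stated does not assume; this is harmless for the application in Theorem~\ref{ph} but leaves the lemma itself unproved in the generality claimed.
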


Lemma~\ref{lemma8} was shown by Ridderbos in~\cite{Rid2006}. It was in fact generalized in~\cite{BC2017}: if $X$ is power homogeneous, $D\sse X$, and $U$ is an open set such that $U\sse\closure{D}$, then $|U|\leq |D|^{\pi_\chi(X)}$.

\begin{lemma}[Ridderbos \cite{Rid2006}]\label{lemma8}
If $X$ is power homogeneous and Hausdorff then $|X|\leq d(X)^{\chi(X)}$.
\end{lemma}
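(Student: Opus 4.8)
The plan is a direct counting argument, and in fact the power homogeneity hypothesis will not be used: the inequality holds for every Hausdorff space, it being only the strengthening with $\pi\chi(X)$ in place of $\chi(X)$ (recorded above from \cite{BC2017}) that genuinely requires it. Write $\la=\chi(X)$ and $\de=d(X)$; we may assume $X$ is infinite, so that $\de\geq\omega$ and in particular $2^\la\leq\de^\la$. Fix a dense set $D\sse X$ with $|D|=\de$, and for each $x\in X$ a local base $\scr{B}_x$ of open sets with $|\scr{B}_x|\leq\la$. Using that each $B\in\scr{B}_x$ is a nonempty open set and $D$ is dense, choose a point $d_B\in B\meet D$; put $E_x=\{d_B:B\in\scr{B}_x\}$, an element of $\setlessequal{D}{\la}$, and $\scr{F}_x=\{E_x\meet B:B\in\scr{B}_x\}$, a subfamily of $\scr{P}(E_x)$ of cardinality at most $\la$.

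The heart of the argument is the equality $\{x\}=\Meet_{F\in\scr{F}_x}\closure{F}$. For ``$x$ lies in the intersection'': each $\closure{E_x\meet B}$ contains $x$, since for any open neighbourhood $V$ of $x$ the open neighbourhood $B\meet V$ of $x$ contains some $B'\in\scr{B}_x$, and then $d_{B'}\in E_x\meet B\meet V$. For ``the intersection is a singleton'': $E_x\meet B\sse B$ yields $\closure{E_x\meet B}\sse\closure{B}$, while $\Meet_{B\in\scr{B}_x}\closure{B}=\{x\}$ because $X$ is Hausdorff --- if $y\neq x$, choose disjoint open sets $U\ni x$ and $W\ni y$, pick $B\in\scr{B}_x$ with $B\sse U$, and observe $B\sse X\minus W$ with $X\minus W$ closed, so $y\notin\closure{B}$.

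It follows that $x\mapsto(E_x,\scr{F}_x)$ is injective: if $(E_x,\scr{F}_x)=(E_{x'},\scr{F}_{x'})$ then $\{x\}=\Meet_{F\in\scr{F}_x}\closure{F}=\{x'\}$. Since $E_x\in\setlessequal{D}{\la}$ and $\scr{F}_x\in\setlessequal{\scr{P}(E_x)}{\la}$, the pair $(E_x,\scr{F}_x)$ ranges over a set of cardinality at most $|\setlessequal{D}{\la}|\cdot\sup\{\,|\setlessequal{\scr{P}(E)}{\la}|:|E|\leq\la\,\}\leq\de^\la\cdot(2^\la)^\la=\de^\la\cdot 2^\la=\de^\la$, whence $|X|\leq\de^\la=d(X)^{\chi(X)}$. (Alternatively one may note $x\in\closure{E_x}$, that $\closure{E_x}$ is Hausdorff with $d(\closure{E_x})\leq\la$ and $\psi_c(\closure{E_x})\leq\psi_c(X)\leq\la$, hence $|\closure{E_x}|\leq 2^\la$ by the standard bound $|Y|\leq 2^{d(Y)\psi_c(Y)}$, so each fibre of $x\mapsto E_x$ has size at most $2^\la$.) I do not anticipate a genuine obstacle here; the only points needing care are the Hausdorff equality $\Meet_{B\in\scr{B}_x}\closure{B}=\{x\}$ and the routine cardinal arithmetic.
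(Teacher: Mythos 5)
Your argument is correct: the choice of the selected points $d_B\in B\meet D$, the verification that $x\in\closure{E_x\meet B}$ via the local base property, and the Hausdorff identity $\Meet_{B\in\scr{B}_x}\closure{B}=\{x\}$ together give an injection of $X$ into a set of size $d(X)^{\chi(X)}$, and the cardinal arithmetic is right. Note, however, that the paper offers no proof of this lemma at all -- it is quoted from Ridderbos \cite{Rid2006} -- so there is nothing internal to compare against; what you have supplied is the classical Pospíšil-type proof, and you are right that for the statement as literally printed (with $\chi(X)$) power homogeneity is superfluous: the bound $|X|\leq d(X)^{\chi(X)}$ holds for every Hausdorff space. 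The one point worth flagging is that Ridderbos's actual theorem, and the form the paper invokes later in the proof of Theorem~\ref{ph} (where it writes $|X|\leq d(X)^{\pi_\chi(X)}\leq\cont^\omega$), is the $\pi$-character version $|X|\leq d(X)^{\pi\chi(X)}$, which genuinely requires power homogeneity (e.g.\ $\beta\nn$ is separable with countable $\pi$-character but has cardinality $2^\cont$) and is not reached by your elementary argument -- a limitation you correctly acknowledge. So your proof establishes the stated lemma, but not the stronger fact the paper actually uses downstream.
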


\begin{theorem}\label{ph}
Let $X$ be a power homogeneous compactum and suppose there exists a countable cover $\scr{C}$ of $X$ such that each $C\in\scr{C}$ is CT and dense in $X$. Then $|X|\leq\cont$
\end{theorem}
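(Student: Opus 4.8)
The plan is to combine the Main Theorem with the structural lemmas on power homogeneous compacta, exactly along the lines that parallel the \juhasz--van Mill argument but with $\pi$-character bounded via Lemma~\ref{lemma5}. First I would observe that since each $C\in\scr{C}$ is dense in $X$ and CT, and $\scr{C}$ is countable (so $|\scr{C}|\leq 2^\omega$), once we know $\pi_\chi(X)\leq\omega$ Lemma~\ref{lemmaDense} will give $wt(X)=\omega$. Then the Main Theorem (together with the compactness of $X$) will yield $|X|=L(X^c_\omega)\leq 2^{L(X)wt(X)\cdot\omega}=2^\omega=\cont$, since for a compactum $L(X)=\omega$ and $\psi_c=\omega$ forces $X^c_\omega$ to be discrete. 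So the whole theorem reduces to establishing $\pi_\chi(X)\leq\omega$.

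To get $\pi_\chi(X)\leq\omega$ I would apply Lemma~\ref{lemma5}: it suffices to show that every closed $F\sse X$ contains a point of countable $\pi$-character. Fix a closed $F\sse X$. Since $X$ is compact Hausdorff, $F$ is itself a compactum, and $F$ inherits the cover $\{C\meet F:C\in\scr{C}\}$ by countably many subspaces that are CT (tightness is monotone on subspaces) and dense in $F$ (as each $C$ is dense in $X$, hence $C\meet F$ is dense in $F$). Thus $F$ is a $\sigma$-CT compactum, so Lemma~\ref{lemma6} gives a non-empty subseparable $G_\delta$-set $G\sse F$; passing to its closure (or noting $G_\delta$-sets in a compactum can be taken closed, as in the proof of Lemma~\ref{lemma5}) we get a non-empty subseparable closed $G_\delta$-set, hence a subseparable closed $G^c_\delta$-set in $F$. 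Now I run a bootstrap: $F$ is a subspace of the power homogeneous compactum $X$, but to invoke Lemma~\ref{lemma7} I need power homogeneity of the ambient space and $\pi_\chi\leq\omega$ of that ambient space — which is the very thing being proved. The clean way around this is to apply the argument to $X$ itself first.

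So the real order is: (i) $X$ is $\sigma$-CT (its cover $\scr{C}$ witnesses this), hence by Lemma~\ref{lemma6} has a non-empty subseparable $G_\delta$-set, which we may take to be a closed $G^c_\delta$-set; (ii) I still need $\pi_\chi(X)\leq\omega$ to feed Lemma~\ref{lemma7}, and here I would use Lemma~\ref{lemma5} directly on $X$ — for each closed $F\sse X$, $F$ is a $\sigma$-CT compactum so by Lemma~\ref{lemma6} it has a non-empty subseparable closed $G_\delta$-set $G\sse F$; since $X$ is compact, $\chi(G,X)=\omega$, and a subseparable closed $G_\delta$ subspace of a compactum has a point of countable $\pi$-character (a point in the countable closure set; this is where I expect to lean on a $\pi$-character estimate like 2.4.3/2.4.8 of \cite{RidThesis} as in Lemma~\ref{lemma5}'s proof, combined with $\pi_\chi(x,F)\leq\pi_\chi(x,G)\chi(G,X)$). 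That delivers the hypothesis of Lemma~\ref{lemma5}, hence $\pi_\chi(X)\leq\omega$. With $\pi_\chi(X)\leq\omega$ in hand, Lemma~\ref{lemmaDense} gives $wt(X)=\omega$ and the Main Theorem closes the argument as above. (Alternatively one can also bring in Lemma~\ref{lemma7} and Lemma~\ref{lemma8} to bound $|X|\leq d(X)^{\chi(X)}$ along a subseparable closed $G^c_\delta$-set, but the $wt$/Main Theorem route is the intended one.)

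The main obstacle is the apparent circularity in step (ii): Lemma~\ref{lemma7} wants $\pi_\chi(X)\leq\omega$ as an input, yet producing points of small $\pi$-character inside arbitrary closed sets is how one proves $\pi_\chi(X)\leq\omega$. The resolution is that Lemma~\ref{lemma5} does not need Lemma~\ref{lemma7} at all — it only needs each closed $F$ to contain \emph{one} point of countable $\pi$-character, and that single point is manufactured purely from Lemma~\ref{lemma6} (applied to the $\sigma$-CT compactum $F$) plus the elementary $\pi$-character inequalities for subspaces of compacta, with no homogeneity of $F$ required. So the logical skeleton is: $\sigma$-CT $+$ compact $\Rightarrow$ (via Lemma~\ref{lemma6} and $\pi$-character arithmetic) every closed subset has a point of countable $\pi$-character $\Rightarrow$ (Lemma~\ref{lemma5}) $\pi_\chi(X)\leq\omega$ $\Rightarrow$ (Lemma~\ref{lemmaDense}) $wt(X)=\omega$ $\Rightarrow$ (Main Theorem) $|X|\leq\cont$. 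I would write it in exactly that order, keeping the Lemma~\ref{lemma7}/Lemma~\ref{lemma8} machinery in reserve for the companion result Theorem~\ref{phbs} rather than for this cardinality bound.
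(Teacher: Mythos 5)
There is a genuine gap at the final step. Your reduction ``$|X|=L(X^c_\omega)\leq 2^{L(X)wt(X)\cdot\omega}=\cont$'' rests on the claim that $X^c_\omega$ is discrete because ``for a compactum $L(X)=\omega$ and $\psi_c=\omega$.'' Compactness gives $L(X)=\omega$ and $\psi(X)=\chi(X)$, but it does not give countable (closed) pseudocharacter: a compactum can have uncountable character at every point (e.g.\ $2^{\cont}$), and nothing in the hypotheses of Theorem~\ref{ph} bounds $\psi_c(X)$. Without $\psi_c(X)\leq\omega$ the $G^c_\omega$-modification need not be discrete, and the Main Theorem only bounds the Lindel\"of degree of $X^c_\delta$ --- i.e.\ every cover of $X$ by $G^c_\delta$-sets has a subcover of size at most $\cont$ --- which by itself says nothing about $|X|$. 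This is precisely why the paper does \emph{not} stop after computing $wt(X)\leq\omega$: it first produces a cover of $X$ by \emph{subseparable} closed $G^c_\delta$-sets (Lemma~\ref{lemma6} gives one such set, and Lemma~\ref{lemma7}, which uses power homogeneity and $\pi_\chi(X)\leq\omega$, spreads it to every point), then applies the Main Theorem to extract a subcover of size $\leq\cont$, concludes $d(X)\leq\cont$ from the countable ``separating'' sets, and finishes with Ridderbos' inequality (Lemma~\ref{lemma8}) $|X|\leq d(X)^{\pi_\chi(X)}\leq\cont^\omega=\cont$. So your decision to keep Lemmas~\ref{lemma7} and~\ref{lemma8} ``in reserve for Theorem~\ref{phbs}'' is exactly backwards: it is Theorem~\ref{phbs}, whose conclusion is about $L(X_\delta)$ rather than $|X|$, that needs only the $wt$ computation plus the Main Theorem; the cardinality bound here requires the subseparable-cover and density machinery.

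A secondary point: in step (ii) you manufacture a point of countable $\pi$-character in each closed $F$ by asserting that a subseparable closed $G_\delta$ subspace of a compactum has such a point. That is not true as stated --- $2^{\cont}$ is a separable compactum with no point of countable $\pi$-character --- so the $\sigma$-CT structure must be used, not just subseparability. The paper sidesteps this by quoting Lemma 2.4 of \cite{JVM2017} directly (every non-empty closed subspace of a $\sigma$-CT compactum contains a point of countable $\pi$-character) before applying Lemma~\ref{lemma5}; your outline for this portion is otherwise in line with the paper, and the $wt(X)\leq\omega$ computation via Lemma~\ref{lemmaDense} is fine, but the argument cannot close without the $d(X)\leq\cont$ and Lemma~\ref{lemma8} steps described above.
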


\begin{proof}
As $X$ is $\sigma$-CT, by Lemma 2.4 in \cite{JVM2017} every non-empty closed subspace $F$ of $X$ has a point $x\in F$ such that $\pi_\chi(x,F)\leq\omega$. As $X$ is power homogeneous, by Lemma~\ref{lemma5}, $\pi_\chi(X)\leq\omega$. Since each $C\in\scr{C}$ is countably tight and dense in $X$, $|\scr{C}|\leq\cont$, and $\pi_\chi(X)\leq\cont$, it follows by Lemma~\ref{lemmaDense} that $wt(X)\leq\omega$.

By Lemma~\ref{lemma6}, there exists a non-empty subseparable closed $G_\delta$-set in $X$. This set is in fact a closed $G^c_\kappa$-set as $X$ is compact. By Lemma~\ref{lemma7}, there exists a cover $\scr{G}$ of $X$ consisting of subseparable $G^c_\delta$-sets. Applying the Main Theorem, there exists $\scr{G}^\prime\in[\scr{G}]^{\leq\cont}$ such that $X=\Un\scr{G}^\prime$. For all $G\in\scr{G}^\prime$ there exists $H_G\in[X]^{\leq\omega}$ such that $G\sse\closure{H_G}$. Therefore, $X=\Un\scr{G}^\prime\sse\Un_{G\in\scr{G}^\prime}\closure{H_G}\sse\closure{\Un_{G\in\scr{G}^\prime}H_G}$ and $H=\Un_{G\in\scr{G}^\prime}H_G$ is dense in $X$. Since $|H|\leq\cont$, we see that $d(X)\leq\cont$. Finally, applying Lemma~\ref{lemma8}, we see that $|X|\leq d(X)^{\pi_\chi(X)}\leq\cont^\omega=\cont$.
\end{proof}

Finally, we show that a theorem of Bella and Spadaro from \cite{BS2017} follows from the Main Theorem and also generalizes to the power homogeneous setting. 

\begin{theorem}\label{phbs}
Let $X$ be a power homogeneous compactum which is the union of countably many CT dense subspaces. Then $L(X_\delta)\leq\cont$.
\end{theorem}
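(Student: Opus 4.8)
The plan is to mimic the structure of the proof of Theorem~\ref{ph}, but to track Lindelöf degree of the $G_\delta$-modification instead of cardinality. As before, the hypotheses give that $X$ is a $\sigma$-CT compactum, so by Lemma~2.4 in \cite{JVM2017} (i.e.\ the result quoted in the proof of Theorem~\ref{ph}) every non-empty closed subspace $F\sse X$ has a point of countable $\pi$-character; since $X$ is power homogeneous, Lemma~\ref{lemma5} yields $\pi_\chi(X)\leq\omega$. Because the members of the given countable cover are CT and dense and $\pi_\chi(X)\leq\cont$, Lemma~\ref{lemmaDense} gives $wt(X)\leq\omega$. So far this is verbatim the opening of the proof of Theorem~\ref{ph}.

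Next I would invoke the Main Theorem directly with $\kappa=\omega$: since $wt(X)\leq\omega$ and $L(X)\leq\omega$ (as $X$ is compact), we get $L(X^c_\omega)\leq 2^{\omega}=\cont$. The remaining issue is to pass from $X^c_\omega$ to $X_\delta$. Since $X$ is compact Hausdorff it is regular, and as remarked after the statement of the Main Theorem in the introduction, for regular $X$ one has $X^c_\kappa\approx X_\kappa$; in particular $X^c_\omega\approx X_\omega=X_\delta$. Hence $L(X_\delta)=L(X^c_\omega)\leq\cont$, which is the conclusion.

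In fact the route through Lemma~\ref{lemma6}, Lemma~\ref{lemma7}, and the $G^c_\delta$-cover is unnecessary here, because we are not trying to bound $|X|$: the Main Theorem already outputs a bound on $L(X^c_\omega)$ for the trivial cover, and that is exactly what the theorem asks for (modulo identifying $X^c_\omega$ with $X_\delta$ via regularity). The only two non-formal inputs are therefore (a) $wt(X)\leq\omega$, which is handled by the Lemma~\ref{lemma5}/Lemma~\ref{lemmaDense} combination exactly as in Theorem~\ref{ph}, and (b) the identification $X^c_\delta\approx X_\delta$ for compacta.

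The main obstacle, such as it is, is purely bookkeeping: confirming that the hypothesis ``power homogeneous'' is genuinely used only to secure $\pi_\chi(X)\leq\omega$ (via Lemmas~\ref{lemma5} and~\ref{lemma4}), and that once $wt(X)=\omega$ is in hand the conclusion follows from the Main Theorem without any further appeal to homogeneity. One should double-check that Lemma~\ref{lemmaDense} applies: its hypotheses require a cover of size $\leq 2^\omega=\cont$ by CT dense subspaces together with $\pi_\chi(X)\leq\cont$, all of which hold. No delicate estimate is needed beyond what is already recorded in the earlier sections.
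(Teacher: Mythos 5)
Your proposal is correct and follows essentially the same route as the paper's own proof: establish $wt(X)\leq\omega$ exactly as in the first paragraph of the proof of Theorem~\ref{ph}, apply the Main Theorem with $\kappa=\omega$ (using $L(X)\leq\omega$ by compactness) to get $L(X^c_\delta)\leq\cont$, and then use regularity of the compactum to identify $X^c_\delta$ with $X_\delta$. Your observation that Lemmas~\ref{lemma6} and~\ref{lemma7} are not needed here matches the paper, which likewise omits them in this argument.
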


\begin{proof}
As in the first paragraph in the proof above, we see that $wt(X)\leq\omega$. By the Main Theorem, $L(X^c_\delta)\leq\cont$. As $X$ is regular, $X_\delta\approx X^c_\delta$ and $L(X_\delta)\leq\cont$. 
\end{proof}

We conclude with two questions. First, it was shown by~\sapirovskii~that if $X$ is compact then $\pi_\chi(X)\leq t(X)$. As $wt(X)\leq t(X)$, it is natural to ask the following.

\begin{question}
If $X$ is a compactum, is $\pi_\chi(X)\leq wt(X)$?
\end{question}

Furthermore, in light of Theorem~\ref{ph} and noting that a weakly countably tight space may not have a \emph{countable} cover $\scr{C}$ that witnesses $wt(X)\leq\kappa$, it is natural to ask the following.
\begin{question}
If $X$ is a weakly countably tight power homogeneous compactum, is $|X|\leq\cont$?
\end{question}

\end{document}